%%% 2010/04/16
%\documentclass[reqno,12pt]{amsart}
\documentclass[12pt]{article}
\usepackage{mathtools}
\usepackage{amsfonts}
\usepackage{pst-node}
\usepackage{tikz-cd}

\usepackage{amsmath,amsfonts,amssymb,amsthm}
\input amssym.def
\topmargin -1.3cm \textwidth 15 cm \textheight 22cm \oddsidemargin
0.25cm \pagestyle{plain} \pagenumbering{arabic}

\setlength{\topmargin}{0mm}
\setlength{\oddsidemargin}{0mm}
\setlength{\evensidemargin}{0mm}
\setlength{\textheight}{225mm}
\setlength{\textwidth}{160mm}

%\numberwithin{equation}{section}

\newcommand{\Z}{{\mathbb Z}}
\newcommand{\C}{{\mathbb C}}

\newcommand{\N}{{\mathbb N}}

\def\<{\langle}
\def\>{\rangle}

\newtheorem{thm}{Theorem}[section]
\newtheorem{prop}[thm]{Proposition}
\newtheorem{lem}[thm]{Lemma}

\newtheorem{rmk}[thm]{Remark}
\newtheorem{definition}[thm]{Definition}

\newtheorem{example}[thm]{Example}

%%%%%%%%%%%%%%%%%%%%%%%%%%%%%%%%%%%%%%%%%%%%%%%%%%%%%
\input amssym.def
\topmargin -1.3cm \textwidth 15 cm \textheight 22cm \oddsidemargin
0.25cm \pagestyle{plain} \pagenumbering{arabic}
\begin{document}

\begin{center}
{\Large \bf  Algebra of $q$-difference operators, affine vertex algebras,
and their modules}
\end{center}

\begin{center}
	{Hongyan Guo
		\footnote{Partially supported by
			NSFC (No.11901224)
			 and NSF of Hubei Province (No.2019CFB160)
		}
		%\footnote{
		%	AMS 2010 Mathematics Subject Classification: 17B65 17B67}
		\\
		School of Mathematics and Statistics,
		Central China Normal University, \\ Wuhan 430079, China
	}
	
\end{center}

\begin{abstract}
	In this paper, we explore a canonical connection between the algebra of $q$-difference operators $\widetilde{V}_{q}$,
affine Lie algebra and
affine vertex algebras associated to certain subalgebra $\mathcal{A}$
of the Lie algebra
$\mathfrak{gl}_{\infty}$.
We also introduce and study a category $\mathcal{O}$
 of $\widetilde{V}_{q}$-modules.
More precisely,
  we obtain a realization of $\widetilde{V}_{q}$
 as a covariant algebra of the affine Lie algebra $\widehat{\mathcal{A}^{*}}$,
 where $\mathcal{A}^{*}$ is a 1-dimensional central extension of
 $\mathcal{A}$.
 We prove that restricted $\widetilde{V_{q}}$-
 modules of level $\ell_{12}$
 correspond to $\Z$-equivariant $\phi$-coordinated quasi-modules
 for the vertex algebra $V_{\widetilde{\mathcal{A}}}(\ell_{12},0)$,
 where $\widetilde{\mathcal{A}}$ is a generalized affine Lie algebra of $\mathcal{A}$.
 In the end, we show that objects in the
 category $\mathcal{O}$ are restricted $\widetilde{V_{q}}$-modules,
  and we classify simple modules in the category $\mathcal{O}$.
	
	\end{abstract}

\section{Introduction}
\def\theequation{1.\arabic{equation}}
\setcounter{equation}{0}

The algebra of $q$-difference operators, also named $q$-analog Virasoro-like algebra, has a lot of background. It can be viewed as a $q$-deformation of Virasoro-like algebra (cf. \cite{KPS94}). It is the (2-dimensional) universal central
extension of the Lie algebra of inner derivations of the quantum torus $\C_{q}[x^{\pm 1}, y^{\pm 1}]$ (cf. \cite{BGK96}, \cite{KPS94}, etc.).
It can also
be viewed as a Lie algebra whose universal enveloping algebra is the toroidal algebra which correspond to the quantum toroidal $\mathfrak{gl}_{1}$ algebra $U_{q,t}(\ddot{\mathfrak{gl}}_{1})$ at $q=t$ (cf. \cite{BGK96}, \cite{BG20}, etc.).
Certain deformation of the universal enveloping algebra of the algebra of $q$-difference operators has been introduced and studied in \cite{M07}, and it has a connection with the deformed Virasoro algebra and $W_{N}$ algebras. The structure theory and representation theory of the algebra of $q$-difference operators has been widely studied
(\cite{BG20}, \cite{LT06}, \cite{LT08},  \cite{MJ98}, \cite{ZZ96}, to name a few.).

More explicitly, the algebra of $q$-difference operators $\widetilde{V}_{q}$ is a Lie algebra spanned by the elements $E_{k,l}$, $c_{1}$, $c_{2}$, where $(k,l)\in\mathbb{Z}^{2}$, $c_{1}$ and $c_{2}$
 		are central elements,
 		with Lie bracket
 		\begin{eqnarray*}
 			[E_{k,l},E_{r,s}]=(q^{rl-sk}-q^{sk-rl})E_{k+r,l+s}
 			+\delta_{k,-r}\delta_{l,-s}(kc_{1}+lc_{2}),
 		\end{eqnarray*}
 		for $(k,l), (r,s)\in\mathbb{Z}^{2}.$
 We add $E_{0,0}=0$ in the definition.
 This paper is twofold.
On the one hand, we study the algebra of $q$-difference operators from the
vertex algebras points of view.
We
build a connection between restricted modules of the algebra of $q$-difference operators
%covariant algebra of an affine Lie algebra,
and certain modules of generalized affine vertex algebras.
On the other hand, we study a category of $\widetilde{V}_{q}$-modules which are locally finite with respect to some
 subalgebra of $\widetilde{V}_{q}$. Modules in the category are proved to be restricted $\widetilde{V}_{q}$-modules and simple modules are classified.

 Constructing vertex algebras from Lie algebras and build a connection
 between the representations of the two algebraic objects is an important
 subject in both Lie algebra and vertex algebra areas.
 For certain Lie algebras $\mathcal{L}$, like Virasoro algebra, Heisenberg algebra
 and affine Lie algebras, etc.,
 the corresponding vertex algebras (denote by $V_{\mathcal{L}}$)
 are the vacuum modules of $\mathcal{L}$,
 and restricted $\mathcal{L}$-modules at fixed levels
  are in one-to-one correspondence to
 $V_{\mathcal{L}}$-modules (cf. \cite{FZ92}, \cite{LL04}, etc.).
 However, there are Lie algebras
 (by abuse of notation, we still denote by $\mathcal{L}$)
  whose restricted modules
 are not related to the vertex algebras $V_{\mathcal{L}}$,
 but vertex algebras constructed from other Lie algebras (cf. \cite{GLTW1},
 \cite{GLTW2}, \cite{LTW20}, etc.).
 And restricted $\mathcal{L}$-modules are not correspond to vertex algebra modules, but rather quasi modules or
 (equivariant) $\phi$-coordinated (quasi) modules which are introduced
 and studied
 in a series of papers (\cite{L06}, \cite{L07}, \cite{L11}, \cite{L13}, etc.).

Affine Kac-Moody algebra $\widehat{\mathfrak{g}}$
is the 1-dimensional universal central
extension of the loop algebra (also called current algebra)
$\mathfrak{g}\otimes\C[t,t^{-1}]$, where $\mathfrak{g}$
is a finite-dimensional simple Lie algebra.
Affine vertex algebras $V_{\widehat{\mathfrak{g}}}(\ell,0)$
%and its generalizations
are constructed from affine Lie algebras
(with $\mathfrak{g}$ possibly infinite-dimensional).
The relations between the
representations of affine Lie algebra $\widehat{\mathfrak{g}}$
and the representations of affine vertex algebras $V_{\widehat{\mathfrak{g}}}(\ell,0)$
are investigated by many authors
(cf. \cite{FZ92}, \cite{LL04}, \cite{L07}, etc.).
In trying to relate
the algebra of $q$-difference operators with vertex algebras and their modules,
we need to consider the following generalized affine Lie algebras.

Let $\mathfrak{g}$ be a (possibly infinite-dimensional) Lie algebra equipped
with a symmetric invariant bilinear form $\langle\;,\;\rangle$.
Suppose that $\psi:\mathfrak{g}\times\mathfrak{g}\longrightarrow\C$ is a 2-cocycle on $\mathfrak{g}$.
Consider the following generalized affine Lie algebra
$$\widetilde{\mathfrak{g}}=\mathfrak{g}\otimes\C[t,t^{-1}]\oplus\C K_{1}\oplus\C K_{2}$$ with Lie bracket
\begin{eqnarray*}
[a\otimes t^{i},b\otimes t^{j}]=[a,b]\otimes t^{i+j}+\psi(a,b)\delta_{i+j+1,0}K_{1}+i\langle a,b\rangle\delta_{i+j,0}K_{2},
\end{eqnarray*}
where $K_{1}$, $K_{2}$ are central elements, $a,b\in\mathfrak{g}$, $i,j\in\Z$.
 It is easy to see that $\widetilde{\mathfrak{g}}$ can also be viewed as a 1-dimensional central extension of the (usual) affine Lie algebra $\widehat{\mathfrak{g}}=\mathfrak{g}\otimes\C[t,t^{-1}]\oplus\C K_{2}$
 (see Remark \ref{p2}). The corresponding vertex algebras $V_{\widetilde{\mathfrak{g}}}(\ell_{12},0)$
 are straightforward to construct.
 Restricted modules of the Lie algebra $\widetilde{\mathfrak{g}}$
 and modules of the vertex algebra $V_{\widetilde{\mathfrak{g}}}(\ell_{12},0)$
 are closely related.
 The relations of the generalized affine vertex algebra $V_{\widetilde{\mathfrak{g}}}(\ell_{12},0)$
 and affine vertex algebra $V_{\widehat{\mathfrak{g}}}(\ell_{2},0)$
 are discussed in this paper.

 Let $\Gamma$ be a group of automorphisms of $\mathfrak{g}$, preserving the bilinear form and satisfying $[ga,b]=0=\langle ga,b\rangle$ for all but finitely many
 $g\in\Gamma$. Let $\chi:\Gamma\longrightarrow\C^{\times}$ be
 a group homomorphism.
 The covariant algebra $\widehat{\mathfrak{g}}[\Gamma]$ which
 is a quotient space of the affine Lie algebra $\widehat{\mathfrak{g}}$
 has been construct and studied in \cite{L07} (see also \cite{G-KK98}).
 $\widehat{\mathfrak{g}}[\Gamma]$ is a generalization of twisted affine Lie algebras.
 If $\chi$ is injective, then restricted modules of level $\ell$ for $\widehat{\mathfrak{g}}[\Gamma]$ correspond
 to quasi-modules for $V_{\widehat{\mathfrak{g}}}(\ell,0)$ viewed as a
  $\Gamma$-vertex algebra (Theorem 4.9 of \cite{L07}).

 In the study of $q$-Virasoro algebra and trigonometric Lie algebras,
certain Lie algebra $\mathcal{A}$ plays an important role
 (cf. \cite{GLTW1}, \cite{GLTW2}, \cite{LTW20}).
 More specifically, $\mathcal{A}$ is a subalgebra of the Lie algebra $\mathfrak{gl}_{\infty}$  which is spanned
 by the basis elements (the elementary matrices) $E_{m,n}$ with $m,n\in\Z$, $m+n\in2\Z$.
 Rewrite the basis elements as $G_{\alpha,m}=E_{m+\alpha,m-\alpha}$
 for $\alpha,m\in\Z$. Then $\mathcal{A}$
 is spanned by $G_{\alpha,m}$, $\alpha,m\in\Z$.
 Let $\alpha,\beta,m,n\in\Z$.
 There is a natural symmetric invariant bilinear form
 on $\mathcal{A}$:
 $$\langle G_{\alpha,m}, G_{\beta,n}\rangle=\delta_{\alpha+\beta,0}\delta_{m-n,0}.$$
  Consider the 2-cocycle $\psi$ on $\mathcal{A}$ defined
  by
  	$$\psi(G_{\alpha,m}, G_{\beta,n})=\alpha\delta_{\alpha+\beta,0}\delta_{m-n,0}.$$
  Even though $\psi$ is proved to be a trivial 2-cocycle (Proposition \ref{dirsum}),
  in order to relate the algebra of $q$-difference operators $\widetilde{V_{q}}$ with vertex algebras we need to consider this 2-cocycle.
  We first show that $\widetilde{V_{q}}$ is
  isomorphic to a covariant algebra of the affine Lie algebra
  $\widehat{\mathcal{A}^{*}}$, where $\mathcal{A}^{*}$ is the 1-dimensional
  central extension of $\mathcal{A}$ via the 2-cocycle $\psi$.
  Then, for any $\ell_{1},\ell_{2}\in\C$, we establish an equivalence between the category of restricted $\widetilde{V_{q}}$-modules of level $\ell_{12}$ (means that $c_{1}, c_{2}$ act as scalars $\ell_{1},\ell_{2}$ respectively)
  and the category of $\Z$-equivariant
  $\phi$-coordinated quasi modules for the vertex algebra $V_{\widetilde{\mathcal{A}}}(\ell_{12},0)$.

 Let $t$ be any positive integer. Consider the subalgebra
 $$\widetilde{V_{q}}^{(t)}=\sum\limits_{k\in\Z,l\geq t}\C E_{k,l}.$$
 To study the restricted modules for the algebra of $q$-difference
  operators, we introduce a category $\mathcal{O}$ of $\widetilde{V_{q}}$-modules.
  It is a subcategory of modules with the property that they
   are locally finite $\widetilde{V_{q}}^{(t)}$-modules for some $t$.
  We prove that objects in the category $\mathcal{O}$
are restricted $\widetilde{V}_{q}$-modules.

Finally, we classify simple modules in the category $\mathcal{O}$.
Let
$$b=
\bigoplus_{l\geq 0,k\in\mathbb{Z}}\mathbb{C}E_{k,l}
\oplus\mathbb{C} c_{1}\oplus\mathbb{C} c_{2}.$$
Consider the induced module
$$\mbox{Ind}_{\ell_{12}}(V)=U(\widetilde{V_{q}})\otimes_{U(b)}V,$$
where $V$ is a $b$-module with $c_{1},c_{2}$ act as $\ell_{1},\ell_{2}$.
We first show that if $V$ is a simple $b$-module, then under
certain conditions $\mbox{Ind}_{\ell_{12}}(V)$ is a simple
$\widetilde{V_{q}}$-module.
Then we prove that simple modules in the category $\mathcal{O}$
are either
highest weight modules or induced modules $\mbox{Ind}_{\ell_{12}}V$
for some simple $b$-module $V$.

  This paper is organized as follows.
  In Section \ref{sec:2}, we study generalized affine Lie algebras and
  the corresponding affine vertex algebras as well as their modules.
  In Section \ref{sec:3}, we give a realization of
  the algebra of $q$-difference operators $\widetilde{V_{q}}$
  in terms of the covariant algebra of the affine Lie algebra $\widehat{\mathcal{A}^{*}}$.
  In Section \ref{sec:4}, we show that restricted $\widetilde{V_{q}}$-module category of level $\ell_{12}$
   is equivalent to the $\Z$-equivariant
  $\phi$-coordinated quasi module category for the generalized affine vertex algebra
   $V_{\widetilde{\mathcal{A}}}(\ell_{12},0)$.
   In Section \ref{sec:5}, we introduce and study a category $\mathcal{O}$
of $\widetilde{V}_{q}$-modules.
We prove that all the modules in the category $\mathcal{O}$
are restricted $\widetilde{V}_{q}$-modules.
We also give a classification of simple modules in the category $\mathcal{O}$.

Throughout this paper, we denote by $\Z$, $\N$, $\mathbb{Z}_{+}$ and $\C$
the sets of integers, nonnegative integers, positive integers and complex numbers
respectively.

\section{Generalized affine vertex algebras and their representations}
\def\theequation{2.\arabic{equation}}
\label{sec:2}
\setcounter{equation}{0}

In this section, we construct and study
certain generalized affine Lie algebras $\widetilde{\mathfrak{g}}$ and
the corresponding generalized affine vertex algebras
$V_{\widetilde{\mathfrak{g}}}(\ell_{12},0)$.
We also discuss the relations between $V_{\widetilde{\mathfrak{g}}}(\ell_{12},0)$
and affine vertex algebra $V_{\widehat{\mathfrak{g}}}(\ell_{2},0)$.

\begin{definition}
{\em
Let $\mathfrak{g}$ be a Lie algebra.
A {\em 2-cocycle} $\psi:\mathfrak{g}\times\mathfrak{g}\longrightarrow\C$ on $\mathfrak{g}$ is a skew-symmetric bilinear form satisfying
$\psi(x,[y,z])+\psi(y,[z,x])+\psi(z,[x,y])=0$ for any $x,y,z\in\mathfrak{g}$.

}
\end{definition}

Let $\mathfrak{g}$ be a (possibly infinite-dimensional) Lie algebra over the complex number field $\C$.
Let $\langle\;,\;\rangle:\mathfrak{g}\times\mathfrak{g}\longrightarrow\C$ be a symmetric invariant bilinear form on $\mathfrak{g}$.
Suppose that
$\psi:\mathfrak{g}\times\mathfrak{g}\longrightarrow\C$ is a 2-cocycle on $\mathfrak{g}$.

With respect to \Big($\mathfrak{g}$, $\langle\;,\;\rangle$, $\psi$\Big),
there are the following three ways that we can get affine-type Lie algebras:
\begin{itemize}
\item[(1)]The affine Lie algebra $\widehat{\mathfrak{g}}=\mathfrak{g}\otimes\C[t,t^{-1}]\oplus\C K_{2}$ with Lie bracket
    \begin{eqnarray}
[a\otimes t^{i},b\otimes t^{j}]=[a,b]\otimes t^{i+j}+i\langle a,b\rangle\delta_{i+j,0}K_{2},
\end{eqnarray}
where $K_{2}$ is a central element, $a,b\in\mathfrak{g}$, $i,j\in\Z$.

\item[(2)] Let $\mathfrak{g}^{*}=\mathfrak{g}\oplus\C K_{1}$ be
 the 1-dimensional central extension of $\mathfrak{g}$ by $K_{1}$ via $\psi$. For $a,b\in\mathfrak{g}$, $\lambda,\mu\in\C$,
    the Lie bracket on $\mathfrak{g}^{*}$ is given by
    \begin{eqnarray}
    [a+\lambda K_{1},b+\mu K_{1}]=[a,b]+\psi(a,b)K_{1}.
    \end{eqnarray}
    The bilinear form $\langle\;,\;\rangle$ on $\mathfrak{g}$ can be naturally extended to a symmetric invariant bilinear form on $\mathfrak{g}^{*}$ with
    $\langle\mathfrak{g}^{*},K_{1}\rangle=\langle K_{1}, \mathfrak{g}^{*}\rangle=0$.
    Then we have the affine Lie algebra
    $\widehat{\mathfrak{g}^{*}}=\mathfrak{g}^{*}\otimes\C[t,t^{-1}]\oplus\C K_{2}$ with Lie bracket
\begin{eqnarray}
[a^{*}\otimes t^{i},b^{*}\otimes t^{j}]=[a^{*},b^{*}]\otimes t^{i+j}
+i\langle a^{*},b^{*}\rangle\delta_{i+j,0}K_{2},
\end{eqnarray}
where $K_{2}$ is a central element, $a^{*},b^{*}\in\mathfrak{g}^{*}$, $i,j\in\Z$.
Clearly, $K_{1}\otimes\C[t,t^{-1}]$ is also in the center of the affine Lie algebra $\widehat{\mathfrak{g}^{*}}$.

\item[(3)] The generalized affine Lie algebra $\widetilde{\mathfrak{g}}=\mathfrak{g}\otimes\C[t,t^{-1}]\oplus\C K_{1}\oplus\C K_{2}$ with Lie bracket
\begin{eqnarray}
[a\otimes t^{i},b\otimes t^{j}]=[a,b]\otimes t^{i+j}+\psi(a,b)\delta_{i+j+1,0}K_{1}+i\langle a,b\rangle\delta_{i+j,0}K_{2},
\label{tilde}
\end{eqnarray}
where $K_{1}$, $K_{2}$ are central elements, $a,b\in\mathfrak{g}$, $i,j\in\Z$.
\end{itemize}

\begin{rmk}
\label{p2}
{\em
\begin{itemize}
%\item [(1)] The Kronecker delta $\delta_{i+j+1,0}$ is chosen for vertex algebra consideration.
\item[(1)] The 2-cocycle $\psi$ can be extended to a 2-cocycle $\psi_{2}$ on
$\widehat{\mathfrak{g}}$ with
$$\psi_{2}(a\otimes t^{i},b\otimes t^{j})=\psi(a,b)\delta_{i+j+1,0},$$
$$\psi_{2}(\widehat{\mathfrak{g}},K_{2})=\psi_{2}(K_{2},\widehat{\mathfrak{g}})=0$$
for $a,b\in\mathfrak{g}$, $i,j\in\Z$.
Then $\widetilde{\mathfrak{g}}$ can be viewed as a 1-dimensional central extension of the affine Lie algebra
$\widehat{\mathfrak{g}}$ by $K_{1}$ via $\psi_{2}$. More precisely,
we can write $\widetilde{\mathfrak{g}}=\widehat{\mathfrak{g}}\oplus\C K_{1}$ with Lie bracket
$$[\widehat{a}+\lambda K_{1},\widehat{b}+\mu K_{1}]
=[\widehat{a},\widehat{b}]+\psi_{2}(\widehat{a},\widehat{b})K_{1},$$
where $\widehat{a},\widehat{b}\in\widehat{\mathfrak{g}}$, $\lambda,\mu\in \C$.
Note that $\widehat{\mathfrak{g}}$ may not be a subalgebra of
$\widetilde{\mathfrak{g}}$.
%Note that unless $\psi$ is a trivial 2-cocycle,
%$\widetilde{\mathfrak{g}}=\widehat{\mathfrak{g}}\oplus\C K_{1}$ is not a direct
%sum of Lie subalgebras.
\item[(2)] It is easy to see that
$\widehat{\mathfrak{g}^{*}}/(K_{1}\otimes\C[t,t^{-1}])\cong \widehat{\mathfrak{g}}$
and $\widetilde{\mathfrak{g}}/\C K_{1}\cong \widehat{\mathfrak{g}}$ as Lie algebras.

\end{itemize}
}
\end{rmk}

In the following, we consider the (general) affine Lie algebra $\widetilde{\mathfrak{g}}$.
For $a\in\mathfrak{g}$, form a generating function
$$a(x)=\sum_{n\in\Z}a(n)x^{-n-1},$$
where $a(n)$ stands for $a\otimes t^{n}$.
The defining relation (\ref{tilde}) of $\widetilde{\mathfrak{g}}$ can be written as
\begin{eqnarray}
&&{}
[a(x_{1}),b(x_{2})]
%=\sum_{i,j\in\Z}[a\otimes t^{i},b\otimes t^{j}]x_{1}^{-i-1}x_{2}^{-j-1}
%\nonumber\\
%&&{}
=[a,b](x_{2})x_{1}^{-1}\delta\Big(\frac{x_{2}}{x_{1}}\Big)
\nonumber\\
&&{}\;\;\;\;+\psi(a,b)x_{1}^{-1}\delta\Big(\frac{x_{2}}{x_{1}}\Big)K_{1}
%\nonumber\\
%&&{}\;\;\;\;
+\langle a,b\rangle\frac{\partial}{\partial x_{2}}x_{1}^{-1}\delta\Big(\frac{x_{2}}{x_{1}}\Big)K_{2} \label{eq:1}
\end{eqnarray}
for $a,b\in\mathfrak{g}$.

Let $\ell_{1},\ell_{2}$ be complex numbers.
View $\C$ as a module for the subalgebra $\mathfrak{g}\otimes\C[t]+\C K_{1}+\C K_{2}$ of $\widetilde{\mathfrak{g}}$
with $\mathfrak{g}\otimes\C[t]$ acting trivially and with $K_{1}$, $K_{2}$ acting as scalars $\ell_{1},\ell_{2}$ respectively.
Then form an induced module
$$V_{\widetilde{\mathfrak{g}}}(\ell_{12},0)
=U(\widetilde{\mathfrak{g}})\otimes_{U(\mathfrak{g}\otimes\C[t]+\C K_{1}+\C K_{2})}\C,$$
where $U(\cdot)$ denotes the universal enveloping algebra of a Lie algebra.

Set ${\bf 1}=1\otimes 1\in V_{\widetilde{\mathfrak{g}}}(\ell_{12},0)$
and identify $\mathfrak{g}$ as a subspace of $V_{\widetilde{\mathfrak{g}}}(\ell_{12},0)$
through linear map
$$\mathfrak{g}\longrightarrow V_{\widetilde{\mathfrak{g}}}(\ell_{12},0);
\;\;a\mapsto a(-1){\bf 1}.$$
Then there is a unique vertex algebra structure on
$(V_{\widetilde{\mathfrak{g}}}(\ell_{12},0),Y,{\bf 1})$ given by $Y(a,x)=a(x)$ for $a\in\mathfrak{g}$.
Furthermore, $V_{\widetilde{\mathfrak{g}}}(\ell_{12},0)$ is a $\Z$-graded
vertex algebra:
\begin{eqnarray}
V_{\widetilde{\mathfrak{g}}}(\ell_{12},0)=\coprod_{n\geq 0}V_{\widetilde{\mathfrak{g}}}(\ell_{12},0)_{(n)},\label{eq:3}
\end{eqnarray}
where $V_{\widetilde{\mathfrak{g}}}(\ell_{12},0)_{(n)}$ is spanned by the vectors
$a^{1}(-m_{1})\cdots a^{r}(-m_{r}){\bf 1}$ for
$r\geq 0$, $a^{i}\in\mathfrak{g}$, $m_{i}\geq 1$ with $m_{1}+\cdots+m_{r}=n$.

For a vector space $W$, set
$$\mathcal{E}(W)=\mbox{Hom}(W,W((x)))\subset(\mbox{End} W)[[x,x^{-1}]].$$
\begin{definition}
{\em
We say that a $\widetilde{\mathfrak{g}}$-module $W$ is {\em restricted} if for any $w\in W$, $a\in\mathfrak{g}$,
$a(n)w=0$ for $n$ sufficiently large, or equivalently, $a(x)\in\mathcal{E}(W)$. We say $W$ is of {\em level} $\ell_{12}$ if the central element $K_{i}$ acts as
scalar $\ell_{i}$, $i=1,2$.
}
\end{definition}

As usual, there is the following result
(cf. Theorem 6.2.12 and Theorem 6.2.13 of \cite{LL04}).
\begin{thm}
\label{cor1}
Let $\ell_{1},\ell_{2}$ be any complex numbers.
Let $W$ be any restricted $\widetilde{\mathfrak{g}}$-module
of level $\ell_{12}$.
Then there exists a unique $V_{\widetilde{\mathfrak{g}}}(\ell_{12},0)$-module structure
on $W$ such that for $a\in\mathfrak{g}$, $Y_{W}(a,x)=a_{W}(x)(=\sum_{n\in\Z}a(n)x^{-n-1})$.
Conversely, any module $W$ for the vertex algebra $V_{\widetilde{\mathfrak{g}}}(\ell_{12},0)$ is naturally
a restricted $\widetilde{\mathfrak{g}}$-module
of level $\ell_{12}$, with $a_{W}(x)=Y_{W}(a,x)$ for $a\in\mathfrak{g}$.
Furthermore, the
$V_{\widetilde{\mathfrak{g}}}(\ell_{12},0)$-submodules of $W$ coincide with the $\widetilde{\mathfrak{g}}$-submodules
of $W$.
\end{thm}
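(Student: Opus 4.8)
The plan is to follow the standard template relating restricted modules of an affine-type Lie algebra to modules for the associated affine vertex algebra, adapting the argument of Theorems 6.2.12 and 6.2.13 of \cite{LL04} to the present central-extension data. The two directions are handled separately, and the crux of each is the commutator formula \eqref{eq:1}. For the forward direction, let $W$ be a restricted $\widetilde{\mathfrak{g}}$-module of level $\ell_{12}$, so that for each $a\in\mathfrak{g}$ the restrictedness hypothesis gives $a_{W}(x)\in\mathcal{E}(W)$. First I would verify that $\{a_{W}(x):a\in\mathfrak{g}\}\cup\{\mathrm{id}_{W}\}$ is a family of pairwise mutually local fields: since the right-hand side of \eqref{eq:1} is a sum of terms of the shape (field)$\cdot x_{1}^{-1}\delta(x_{2}/x_{1})$ and (scalar)$\cdot\partial_{x_{2}}x_{1}^{-1}\delta(x_{2}/x_{1})$, multiplication of the commutator by $(x_{1}-x_{2})^{2}$ annihilates it, so locality of order $2$ holds.

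I would then invoke the general construction of a vertex algebra from a set of mutually local fields (the local-system machinery of \cite{L06} and \cite{LL04}, Chapter~5), which equips the span of the iterated $n$-th products of these fields with a vertex algebra structure acting on $W$. The key point is to produce a vertex algebra homomorphism from $V_{\widetilde{\mathfrak{g}}}(\ell_{12},0)$ onto this generated vertex algebra, realizing $W$ as a module. This uses the universal property of $V_{\widetilde{\mathfrak{g}}}(\ell_{12},0)$: it is generated by $\mathfrak{g}$ (via $a\mapsto a(-1)\mathbf{1}$) subject to exactly the relations packaged in \eqref{eq:1} with $K_{1},K_{2}$ specialized to $\ell_{1},\ell_{2}$, and since the fields $a_{W}(x)$ satisfy precisely these relations with the correct central scalars, the assignment $a\mapsto a_{W}(x)$ extends to the desired structure $Y_{W}$. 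Uniqueness is immediate, because $V_{\widetilde{\mathfrak{g}}}(\ell_{12},0)$ is generated as a vertex algebra by $\mathfrak{g}$, so $Y_{W}$ is forced on all of $V_{\widetilde{\mathfrak{g}}}(\ell_{12},0)$ by the iterate formula once it is fixed on the generators.

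For the converse, given a $V_{\widetilde{\mathfrak{g}}}(\ell_{12},0)$-module $(W,Y_{W})$, I would define $a(n)\in\mathrm{End}\,W$ by $Y_{W}(a,x)=\sum_{n\in\Z}a(n)x^{-n-1}$ for $a\in\mathfrak{g}$ and apply the module commutator formula for vertex algebras to the pair $a,b\in\mathfrak{g}\subset V_{\widetilde{\mathfrak{g}}}(\ell_{12},0)$; comparing with the singular part of $Y(a,x)b$ recovers exactly \eqref{eq:1}, so $W$ becomes a $\widetilde{\mathfrak{g}}$-module, restricted by the lower-truncation axiom and of level $\ell_{12}$ since the central terms match the vacuum normalization. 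For the submodule assertion, a subspace of $W$ is a $\widetilde{\mathfrak{g}}$-submodule iff it is stable under all $a(n)$, and a $V_{\widetilde{\mathfrak{g}}}(\ell_{12},0)$-submodule iff it is stable under all $v_{n}$ with $v\in V_{\widetilde{\mathfrak{g}}}(\ell_{12},0)$; because every $v_{n}$ is built from the $a(m)$ by iterated normal-ordered products (as $\mathfrak{g}$ generates the vertex algebra), the two stability conditions coincide.

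I expect the main obstacle to be the forward direction, and specifically the identification of the vertex algebra generated by the local fields $a_{W}(x)$ with a quotient of $V_{\widetilde{\mathfrak{g}}}(\ell_{12},0)$ through the universal property: one must check that no relations beyond those in \eqref{eq:1} are imposed and that both central parameters $\ell_{1},\ell_{2}$ are correctly tracked through the two distinct $\delta$-function terms. The locality computation, the converse direction, and the submodule comparison are then essentially routine applications of standard $\delta$-function and commutator-formula identities.
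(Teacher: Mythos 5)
Your proposal is correct and follows exactly the route the paper itself takes: the paper offers no written proof but simply cites Theorems 6.2.12 and 6.2.13 of \cite{LL04}, and your argument (locality of order $2$ from the commutator formula \eqref{eq:1}, the local-system construction plus the universal property of the induced module for the forward direction, the module commutator formula for the converse, and generation by $\mathfrak{g}$ for the submodule statement) is precisely that standard template adapted to the extra central term. Nothing further is needed.
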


For affine Lie algebras $\widehat{\mathfrak{g}}$ and $\widehat{\mathfrak{g}^{*}}$,
we denote by $V_{\widehat{\mathfrak{g}}}(\ell_{2},0)$ and
$V_{\widehat{\mathfrak{g}^{*}}}(\ell_{2},0)$ their corresponding affine vertex algebras
with $K_{2}$ acts as $\ell_{2}$ (cf. \cite{FZ92}, \cite{LL04}, etc.).

Let $J_{\widehat{\mathfrak{g}}}(\ell_{2},0)$,
$J_{\widehat{\mathfrak{g}^{*}}}(\ell_{2},0)$,
$J_{\widetilde{\mathfrak{g}}}(\ell_{12},0)$ be the maximal (two-sided) ideals
of the vertex algebras $V_{\widehat{\mathfrak{g}}}(\ell_{2},0)$,
$V_{\widehat{\mathfrak{g}^{*}}}(\ell_{2},0)$,
$V_{\widetilde{\mathfrak{g}}}(\ell_{12},0)$ respectively.
Denote by
 $L_{\widetilde{\mathfrak{g}}}(\ell_{12},0)=
 V_{\widetilde{\mathfrak{g}}}(\ell_{12},0)/J_{\widetilde{\mathfrak{g}}}(\ell_{12},0)$,
 $L_{\widehat{\mathfrak{g}^{*}}}(\ell_{2},0)
=V_{\widehat{\mathfrak{g}^{*}}}(\ell_{2},0)/J_{\widehat{\mathfrak{g}^{*}}}(\ell_{2},0)$,
and  $L_{\widehat{\mathfrak{g}}}(\ell_{2},0)
=V_{\widehat{\mathfrak{g}}}(\ell_{2},0)/J_{\widehat{\mathfrak{g}}}(\ell_{2},0)$
 the corresponding simple vertex algebras.

We now investigate the relations between
the three types of affine vertex algebras $V_{\widehat{\mathfrak{g}}}(\ell_{2},0)$,
$V_{\widehat{\mathfrak{g}^{*}}}(\ell_{2},0)$, and
$V_{\widetilde{\mathfrak{g}}}(\ell_{12},0)$
as well as their simple quotients.

\begin{rmk}
{\em	There is a natural surjective vertex algebra homomorphism
	$
		\varphi:V_{\widehat{\mathfrak{g}^{*}}}(\ell_{2},0)\longrightarrow V_{\widehat{\mathfrak{g}}}(\ell_{2},0)$ with $K_{1}(-m){\bf 1}\mapsto 0$ for $m\geq 1$.
Then
$\mbox{Ker}(\varphi)=\langle K_{1}(-m){\bf 1}, m\geq 1\rangle$. It induces a vertex algebra isomorphism
	\begin{eqnarray}
		V_{\widehat{\mathfrak{g}^{*}}}(\ell_{2},0)\slash \langle K_{1}(-m){\bf 1}, m\geq 1\rangle\cong V_{\widehat{\mathfrak{g}}}(\ell_{2},0). \label{vaiso}
	\end{eqnarray}
And
\begin{eqnarray}
	L_{\widehat{\mathfrak{g}^{*}}}(\ell_{2},0)\cong L_{\widehat{\mathfrak{g}}}(\ell_{2},0)
\end{eqnarray}
as simple vertex algebras, for any $\ell_{2}\in\C$.
}
\end{rmk}

Clearly, if $\ell_{1}=0$, then
$$V_{\widetilde{\mathfrak{g}}}(\ell_{12},0)= V_{\widehat{\mathfrak{g}}}(\ell_{2},0)
\;\;\mbox{and}\;\;
L_{\widetilde{\mathfrak{g}}}(\ell_{12},0)= L_{\widehat{\mathfrak{g}}}(\ell_{2},0)$$
as vertex algebras.

\begin{definition}
{\em A 2-cocycle $\psi:\mathfrak{g}\times\mathfrak{g}\longrightarrow\C$
is said to be {\em trivial} if there exists a linear map
$\mu:\mathfrak{g}\longrightarrow\C$ such that for any $x,y\in\mathfrak{g}$,
$$\psi(x,y)=\mu([x,y]).$$
}
\end{definition}

\begin{prop}
\label{tcoyc}
Suppose that $\psi:\mathfrak{g}\times\mathfrak{g}\longrightarrow\C$ is a trivial 2-cocycle. Then $\mathfrak{g}^{*}\cong\mathfrak{g}\oplus\C K_{1}$
and $\widetilde{\mathfrak{g}}\cong\widehat{\mathfrak{g}}\oplus\C K_{1}$
as Lie algebras.
\end{prop}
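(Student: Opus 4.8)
The plan is to exhibit explicit Lie algebra isomorphisms by "untwisting" the central extension using the linear map $\mu$ guaranteed by triviality of $\psi$. The statement has two parts, and the second (the isomorphism $\widetilde{\mathfrak{g}}\cong\widehat{\mathfrak{g}}\oplus\C K_1$) will follow easily once the first (the splitting $\mathfrak{g}^*\cong\mathfrak{g}\oplus\C K_1$) is understood, since the cocycle $\psi_2$ extending $\psi$ to $\widehat{\mathfrak{g}}$ is built from $\psi$ in a parallel fashion. So I would treat the $\mathfrak{g}^*$ case carefully and then transport the argument.

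First I would record the hypothesis: there is a linear map $\mu:\mathfrak{g}\to\C$ with $\psi(x,y)=\mu([x,y])$ for all $x,y\in\mathfrak{g}$. For the first isomorphism, I would define a map $\Phi:\mathfrak{g}^*\to\mathfrak{g}\oplus\C K_1$ (where the right-hand side carries the \emph{trivial} central extension, i.e.\ $K_1$ central and $\mathfrak{g}$ a subalgebra) by the shear
\begin{eqnarray*}
\Phi(a+\lambda K_1)=a+(\lambda-\mu(a))K_1,\qquad a\in\mathfrak{g},\ \lambda\in\C.
\end{eqnarray*}
This is clearly a linear bijection (its inverse sends $a+\nu K_1\mapsto a+(\nu+\mu(a))K_1$). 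The key computation is that $\Phi$ is a Lie algebra homomorphism: on the source, $[a+\lambda K_1, b+\mu' K_1]=[a,b]+\psi(a,b)K_1$, and applying $\Phi$ gives $[a,b]+(\psi(a,b)-\mu([a,b]))K_1=[a,b]$, using $\psi(a,b)=\mu([a,b])$; on the other hand, in the trivial extension $[\Phi(a+\lambda K_1),\Phi(b+\mu'K_1)]=[a,b]$ since $K_1$ is central there. The two agree, so $\Phi$ is the desired isomorphism.

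Next I would extend $\mu$ to a linear functional $\tilde{\mu}$ on $\widehat{\mathfrak{g}}$ by setting $\tilde{\mu}(a\otimes t^i)=\mu(a)\delta_{i+1,0}$ and $\tilde{\mu}(K_2)=0$, chosen precisely so that $\tilde{\mu}([a\otimes t^i, b\otimes t^j])=\mu([a,b])\delta_{i+j+1,0}=\psi_2(a\otimes t^i,b\otimes t^j)$, matching the formula for $\psi_2$ from Remark \ref{p2}. One must check this identity also respects the $K_2$-term: since $\tilde{\mu}(K_2)=0$ and $\psi_2(\widehat{\mathfrak{g}},K_2)=0$, the central $K_2$ contributions vanish on both sides, so $\psi_2=\tilde{\mu}\circ[\,\cdot\,,\cdot\,]$ on $\widehat{\mathfrak{g}}$, i.e.\ $\psi_2$ is a trivial $2$-cocycle on $\widehat{\mathfrak{g}}$. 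Invoking the description $\widetilde{\mathfrak{g}}=\widehat{\mathfrak{g}}\oplus\C K_1$ with bracket twisted by $\psi_2$ (Remark \ref{p2}(1)), the same shearing map $\widehat{a}+\lambda K_1\mapsto \widehat{a}+(\lambda-\tilde{\mu}(\widehat{a}))K_1$ furnishes the isomorphism $\widetilde{\mathfrak{g}}\cong\widehat{\mathfrak{g}}\oplus\C K_1$.

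The main obstacle is not conceptual but bookkeeping: I must verify that $\tilde{\mu}$ is well-defined as a functional on the full affine algebra $\widehat{\mathfrak{g}}$ (including the central $K_2$ direction) and that its associated coboundary reproduces $\psi_2$ \emph{exactly}, paying attention to the $\delta_{i+j+1,0}$ support condition so that no spurious terms appear at other degrees. Once the identity $\psi_2=\tilde{\mu}\circ[\,\cdot\,,\cdot\,]$ is confirmed, the isomorphism is the generic statement that a Lie algebra central extension by a \emph{trivial} cocycle (a coboundary) splits, applied in two parallel instances.
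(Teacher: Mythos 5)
Your proposal is correct and follows essentially the same route as the paper: both arguments split the central extension via the shear determined by $\mu$ (you write the map from $\mathfrak{g}^{*}$ to $\mathfrak{g}\oplus\C K_{1}$, the paper writes its inverse), and both handle the affine case by extending $\mu$ to $\mu_{2}$ with $\mu_{2}(a\otimes t^{i})=\mu(a)\delta_{i+1,0}$, $\mu_{2}(K_{2})=0$, so that $\psi_{2}$ becomes a coboundary. No gaps.
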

\begin{proof}
Let $\mu:\mathfrak{g}\longrightarrow\C$ be a linear map such
that $\psi(x,y)=\mu([x,y])$ for any $x,y\in\mathfrak{g}$.
It is easy to check that
$f:\mathfrak{g}\oplus\C K_{1}\longrightarrow\mathfrak{g}^{*}$,
$(a,\lambda K_{1})\mapsto a+(\mu(a)+\lambda)K_{1}$,
$a\in\mathfrak{g}$, $\lambda\in\C$,
is a Lie algebra isomorphism.

Note that the 2-cocycle $\psi_{2}$ in Remark \ref{p2} then is also a trivial 2-cocycle.
Let $\mu_{2}:\widehat{\mathfrak{g}}\longrightarrow\C$ be a linear map defined by
$\mu_{2}(a\otimes t^{i}+\lambda K_{2})=\mu(a)\delta_{i+1,0}$ for any $a\in\mathfrak{g}$,
$i\in\Z$, $\lambda\in\C$.
Then $\psi_{2}(\widehat{a},\widehat{b})=\mu_{2}([\widehat{a},\widehat{b}])$ for any $\widehat{a},\widehat{b}\in\widehat{\mathfrak{g}}$.
Similarly, $f_{2}:\widehat{\mathfrak{g}}\oplus
\C K_{1}\longrightarrow\widetilde{\mathfrak{g}}$,
$(\widehat{a},\lambda K_{1})\mapsto \widehat{a}+(\mu_{2}(\widehat{a})+\lambda)K_{1}$,
$\widehat{a}\in\widehat{\mathfrak{g}}$, $\lambda\in\C$,
is a Lie algebra isomorphism.
\end{proof}

Then we can get the following assertion.
\begin{prop}
\label{iso}
Let $\ell_{1}$ be any nonzero complex number.
If $\psi$ is a trivial 2-cocycle on $\mathfrak{g}$.
Then we have
$$V_{\widetilde{\mathfrak{g}}}(\ell_{12},0)\cong V_{\widehat{\mathfrak{g}}}(\ell_{2},0)$$
as vertex algebras, for any $\ell_{2}\in\C$.
Hence also $L_{\widetilde{\mathfrak{g}}}(\ell_{12},0)\cong L_{\widehat{\mathfrak{g}}}(\ell_{2},0)$ for any $\ell_{2}\in\C$.
\end{prop}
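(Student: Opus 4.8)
The plan is to exploit the explicit Lie algebra isomorphism of Proposition \ref{tcoyc} at the level of generating fields, turning the difference between $\widetilde{\mathfrak g}$ and $\widehat{\mathfrak g}$ into a harmless constant shift once $K_1$ acts as a scalar. Write $\psi(a,b)=\mu([a,b])$ for a linear map $\mu:\mathfrak g\to\C$. Proposition \ref{tcoyc} realizes a copy of $\widehat{\mathfrak g}$ inside $\widetilde{\mathfrak g}$ spanned by the shifted elements $\bar a(i)=a(i)+\mu(a)\delta_{i+1,0}K_1$ together with $K_2$. On any restricted $\widetilde{\mathfrak g}$-module of level $\ell_{12}$, where $K_1$ acts as $\ell_1$, these assemble into the generating function
$$\bar a(x)=\sum_{i\in\Z}\bar a(i)x^{-i-1}=a(x)+\mu(a)\ell_1 ,$$
since only the $i=-1$ term is modified. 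A direct computation using (\ref{eq:1}) then shows, because constant fields are central and $\overline{[a,b]}(x)=[a,b](x)+\mu([a,b])\ell_1$, that the shifted fields satisfy
$$[\bar a(x_1),\bar b(x_2)]=\overline{[a,b]}(x_2)\,x_1^{-1}\delta\Big(\tfrac{x_2}{x_1}\Big)+\langle a,b\rangle\,\tfrac{\partial}{\partial x_2}x_1^{-1}\delta\Big(\tfrac{x_2}{x_1}\Big)\ell_2 .$$
This is exactly the defining relation of $\widehat{\mathfrak g}$ at level $\ell_2$: the $\psi$-term has been absorbed into the bracket term.

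Next I would promote this field identity to a vertex algebra isomorphism. On $V_{\widetilde{\mathfrak g}}(\ell_{12},0)$ one has $\bar a(x)=Y\big(a(-1)\mathbf 1+\mu(a)\ell_1\mathbf 1,x\big)$, using $Y(\mathbf 1,x)=\mathrm{Id}$, so setting $\theta(a)=a(-1)\mathbf 1+\mu(a)\ell_1\mathbf 1$ gives elements whose vertex operators satisfy the $\widehat{\mathfrak g}$-relations at level $\ell_2$. By the universal property of the affine vacuum vertex algebra (the homomorphism form of Theorem \ref{cor1}; cf. Theorems 6.2.12--6.2.13 of \cite{LL04}), $a(-1)\mathbf 1\mapsto\theta(a)$ extends uniquely to a vertex algebra homomorphism $\Phi:V_{\widehat{\mathfrak g}}(\ell_2,0)\to V_{\widetilde{\mathfrak g}}(\ell_{12},0)$. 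Symmetrically, the fields $\hat a(x)=a(x)-\mu(a)\ell_1$ on $V_{\widehat{\mathfrak g}}(\ell_2,0)$ satisfy the $\widetilde{\mathfrak g}$-relations at level $\ell_{12}$, the same computation run backwards; so $V_{\widehat{\mathfrak g}}(\ell_2,0)$ is a restricted $\widetilde{\mathfrak g}$-module of level $\ell_{12}$, and Theorem \ref{cor1} furnishes a vertex algebra homomorphism $\Psi:V_{\widetilde{\mathfrak g}}(\ell_{12},0)\to V_{\widehat{\mathfrak g}}(\ell_2,0)$ with $a(-1)\mathbf 1\mapsto a(-1)\mathbf 1-\mu(a)\ell_1\mathbf 1$.

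Finally I would check that $\Phi$ and $\Psi$ are mutually inverse. Both fix $\mathbf 1$, and on generators $\Psi\Phi(a(-1)\mathbf 1)=\Psi\big(a(-1)\mathbf 1+\mu(a)\ell_1\mathbf 1\big)=a(-1)\mathbf 1$, and likewise $\Phi\Psi=\mathrm{id}$ on generators; since a vertex algebra homomorphism is determined by its values on a generating set, and both vertex algebras are generated by the $a(-1)\mathbf 1$, it follows that $\Phi$ is an isomorphism. Passing to the quotients by the (unique) maximal ideals, which $\Phi$ necessarily matches, yields $L_{\widetilde{\mathfrak g}}(\ell_{12},0)\cong L_{\widehat{\mathfrak g}}(\ell_2,0)$ for every $\ell_2$. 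The main obstacle is conceptual rather than computational: one must ensure the shifted fields produce honest vertex algebra homomorphisms rather than mere module maps, which is precisely what the universal property underlying Theorem \ref{cor1} guarantees; once both maps are in hand, bijectivity is the formal check above. One notes in passing that the argument in fact requires no hypothesis on $\ell_1$, the case $\ell_1=0$ being already immediate.
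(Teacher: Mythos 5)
Your proposal is correct and follows essentially the same route as the paper: both use the trivializing map $\mu$ to twist the generators by the constant $\mu(a)\ell_{1}\mathbf 1$, producing mutually inverse vertex algebra homomorphisms determined by $a(-1)\mathbf 1\mapsto a(-1)\mathbf 1\pm\mu(a)\ell_{1}\mathbf 1$ on the two vacuum modules. The paper phrases this as transporting the module structures via Proposition \ref{tcoyc} and checking $\varphi(a_{n}b)=\varphi(a)_{n}\varphi(b)$ directly, while you phrase it through the shifted fields and the universal property underlying Theorem \ref{cor1}, but the mechanism is identical.
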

\begin{proof}
$V_{\widehat{\mathfrak{g}}}(\ell_{2},0)$ is naturally a $\widehat{\mathfrak{g}}$-module.
It can be viewed as a $\widehat{\mathfrak{g}}\oplus\C K_{1}$-module with
$K_{1}$ acts as a scalar $\ell_{1}$. Explicitly,
$(\widehat{a},\lambda K_{1}).w=\widehat{a}.w+\lambda\ell_{1}w$ for
$\widehat{a}\in\widehat{\mathfrak{g}}$, $\lambda\in\C$,
$w\in V_{\widehat{\mathfrak{g}}}(\ell_{2},0)$.
By Proposition \ref{tcoyc},
$V_{\widehat{\mathfrak{g}}}(\ell_{2},0)$ is then a $\widetilde{\mathfrak{g}}$-module
with $a\otimes t^{i}$ acts as $(a\otimes t^{i},-\mu(a)\delta_{i+1,0}K_{1})$,
$K_{1}$ acts as $\ell_{1}$, $K_{2}$ acts as $\ell_{2}$, for any $a\in\mathfrak{g}$, $i\in\Z$.

Let $\varphi:V_{\widetilde{\mathfrak{g}}}(\ell_{12},0)\longrightarrow V_{\widehat{\mathfrak{g}}}(\ell_{2},0)$ be a $\widetilde{\mathfrak{g}}$-module
homomorphism with $\varphi({\bf 1})={\bf 1}$.
Then
$$\varphi(a(-1){\bf 1})=a(-1).{\bf 1}=a(-1){\bf 1}-\mu(a)\ell_{1}{\bf 1}$$
for any $a\in\mathfrak{g}$.
It is straightforward to check that $\varphi(a_{n}b)=\varphi(a)_{n}\varphi(b)$
for any $a,b\in\mathfrak{g}$, $n\in\Z$ (note that ${\bf 1}_{n}b=\delta_{n+1,0}b$).
Hence $\varphi$ is a vertex algebra homomorphism.

Similarly, $V_{\widetilde{\mathfrak{g}}}(\ell_{12},0)$ can be viewed as a
$\widehat{\mathfrak{g}}$-module with $K_{2}$ acts as $\ell_{2}$, $a\otimes t^{i}$
acts as $a\otimes t^{i}+\mu(a)\delta_{i+1,0}\ell_{1}$ for any $a\in\mathfrak{g}$,
$i\in\Z$.
Let $\Phi:V_{\widehat{\mathfrak{g}}}(\ell_{2},0)\longrightarrow V_{\widetilde{\mathfrak{g}}}(\ell_{12},0)$ be a $\widehat{\mathfrak{g}}$-module
homomorphism with $\Phi({\bf 1})={\bf 1}$. Then $\varphi$ is a vertex algebra
isomorphism with inverse $\Phi$.

\end{proof}

At the end of this section, we give two examples.
The second example plays an important role in our study of
the algebra of $q$-difference operators.

\begin{example}
\label{exmp1}
{\em $\mathfrak{g}=\mathfrak{gl}_{\infty}$ is the Lie algebra of doubly infinite complex
matrices with only finitely many nonzero entries under the usual commutator bracket.
For any $m,n\in\Z$, denote by $E_{m,n}$ the matrix whose only nonzero entry is the $(m,n)$-entry which is 1.
 Then $E_{m,n}$, $m,n\in\Z$, form a bases of $\mathfrak{gl}_{\infty}$.
Let $m,n,p,q\in\Z$.
The Lie bracket on $\mathfrak{gl}_{\infty}$ is given by
$$[E_{m,n}, E_{p,q}]=\delta_{n,p}E_{m,q}-\delta_{q,m}E_{p,n}.$$
$\mathfrak{gl}_{\infty}$ is equipped with a natural symmetric invariant bilinear form:
$$\langle E_{m,n}, E_{p,q}\rangle=\mbox{trace}(E_{m,n}E_{p,q})=\delta_{m,q}\delta_{n,p}.$$
There is a 2-cocycle $\psi$ on the Lie algebra $\mathfrak{gl}_{\infty}$ defined by
$$\psi(E_{m,n}, E_{n,m})=1=-\psi(E_{n,m}, E_{m,n})
\;\;\mbox{if}\;m\leq 0 \;\mbox{and}\;n\geq 1,$$
$$\psi(E_{m,n}, E_{p,q})=0\;\;\mbox{otherwise}.$$
The corresponding 1-dimensional central extension $\mathfrak{gl}_{\infty}^{*}$
and its restricted modules has been related to certain quantum vertex algebras and their
$\phi$-coordinated modules in \cite{JL14}.
}
\end{example}

\begin{example}
\label{exmp2}
{\em
Let $\mathfrak{g}=\mathcal{A}:=\mbox{span}\{E_{m,n} \ | \ m+n\in2\Z\}$.
 Then $\mathcal{A}$ is a subalgebra of $\mathfrak{gl}_{\infty}$. For $\alpha,m\in\Z$, denote by
 	$G_{\alpha,m}=E_{m+\alpha,m-\alpha}\in\mathcal{A}.$
 Then $\mathcal{A}=\mbox{span}\{G_{\alpha,m} \ | \ \alpha,m\in\Z\}$
 	with Lie bracket
 	\begin{eqnarray}
 		[G_{\alpha,m}, G_{\beta,n}]=\delta_{\alpha+\beta,m-n}G_{\alpha+\beta,\alpha+n}
 		-\delta_{\alpha+\beta,n-m}G_{\alpha+\beta,n-\alpha}
 	\end{eqnarray}
 for $\alpha,\beta,m,n\in\Z$.
 The symmetric invariant bilinear form
 on $\mathfrak{gl}_{\infty}$ in Example \ref{exmp1} restricted to $\mathcal{A}$
  is a symmetric invariant bilinear form on $\mathcal{A}$ with
  $$\langle G_{\alpha,m}, G_{\beta,n}\rangle=\delta_{\alpha+\beta,0}\delta_{m-n,0},$$
  for $\alpha,\beta,m,n\in\Z$.
 Let
 	$$\psi:\mathcal{A}\times\mathcal{A}\longrightarrow\C$$ be a bilinear map defined by
 	$$\psi(G_{\alpha,m}, G_{\beta,n})=\alpha\delta_{\alpha+\beta,0}\delta_{m-n,0}.$$
 	It is straightforward to check that $\psi$ is a 2-cocycle.
 The Lie algebra $\mathcal{A}$, vertex algebras related to $\widehat{\mathcal{A}}$ and their (equivariant) quasi modules
 are in connection with restricted modules of certain Lie algebras
 in \cite{GLTW1}, \cite{GLTW2}, \cite{LTW20}, etc.
}
\end{example}

The importance and significance of vertex algebras $V_{\widehat{\mathcal{A}^{*}}}(\ell_{2},0)$
and $V_{\widetilde{\mathcal{A}}}(\ell_{12},0)$ as well as their
certain module categories show up in the study of
the algebra of $q$-difference operators as we will see in the following two sections.

\section{Affine Lie algebra $\widehat{\mathcal{A}^{*}}$ and the algebra of $q$-difference operators}
\def\theequation{3.\arabic{equation}}
\label{sec:3}
\setcounter{equation}{0}
 In this section,
we first review the algebra of $q$-difference operators $\widetilde{V_{q}}$
(cf. \cite{BG20},
\cite{KPS94}, etc.).
 Then we show that $\widetilde{V_{q}}$ is isomorphic to the $\Z$-covariant algebra of $\widehat{\mathcal{A}^{*}}$.
  At last, we show that restricted $\widetilde{V}_{q}$-modules of level $\ell_{12}$ with $\ell_{1}=0$ are one-to-one correspond to equivariant quasi modules for the affine vertex algebra $V_{\widehat{\mathcal{A}^{*}}}(\ell_{2},0)$.

Let $q\in\C^{\times}$ be generic, i.e. $q$ is not a root of unity.
 The algebra of $q$-difference operators is the universal central extension
of the Lie algebra of inner derivations of the
quantum torus $\C_{q}[x^{\pm 1}, y^{\pm 1}]$ (\cite{KPS94}).
We rewrite its definition (and add $E_{0,0}=0$) in the following way.
\begin{definition}
 	{\em The {\em algebra of $q$-difference operators} $\widetilde{V}_{q}$ is a Lie algebra spanned by $E_{k,l}$, $c_{1}$, $c_{2}$, where $(k,l)\in\mathbb{Z}^{2}$, $c_{1}$ and $c_{2}$
 		are central elements,
 		with Lie bracket
 		\begin{eqnarray}
 			[E_{k,l},E_{r,s}]=(q^{rl-sk}-q^{sk-rl})E_{k+r,l+s}
 			+\delta_{k,-r}\delta_{l,-s}(kc_{1}+lc_{2}),\label{eq:dif}
 		\end{eqnarray}
 		for $(k,l), (r,s)\in\mathbb{Z}^{2}.$
 	}
 \end{definition}

We first give a characterization of $\widetilde{V}_{q}$ as
certain covariant algebra of the affine Lie algebra $\widehat{\mathcal{A}^{*}}$.

 For any $r\in\Z$, define
$$\sigma_{r}(G_{\alpha,m})=G_{\alpha,m+r},\;\;\sigma_{r}(K_{1})=K_{1}.$$
It is easy to see that $\sigma_{r}$ is an automorphism of $\mathcal{A}^{*}$,
 for any $r\in\Z$.
Let $\Gamma=\{\sigma_{r}\ | \ r\in\Z\}$. Then $\Gamma\cong\Z$,
and $\Gamma$ preserves $\langle\;,\;\rangle$.
$\Gamma$ extends canonically to an automorphism group of
$\widehat{\mathcal{A}^{*}}$ and furthermore to an automorphism
group of
the $\Z$-graded vertex algebra $V_{\widehat{\mathcal{A}^{*}}}(\ell_{2},0)$.
Consider the following linear character
$$\chi_{q}:\Z\longrightarrow\C^{\times};\;\;r\mapsto q^{r}
\;\;\mbox{for}\;r\in\Z.$$

We have (cf. \cite{G-KK98}, \cite{L07}):
\begin{prop}
	\label{iso2}
	The algebra of $q$-difference operators $\widetilde{V}_{q}$ is isomorphic to the $(\Z,\chi_{q})$-covariant algebra $\widehat{\mathcal{A}^{*}}[\Z]$ of the affine Lie algebra $\widehat{\mathcal{A}^{*}}$ with
	$c_{1}=\overline{K_{1}\otimes 1}$, $c_{2}=K_{2}$, and
	$$E_{\alpha,m}=\overline{G_{\alpha,0}\otimes t^{m}}\;\;\mbox{for}\;\alpha,m\in\Z.$$
\end{prop}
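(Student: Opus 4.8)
The plan is to exhibit the natural linear map $\Theta:\widetilde{V}_{q}\to \widehat{\mathcal{A}^{*}}[\Z]$ determined by $E_{\alpha,m}\mapsto \overline{G_{\alpha,0}\otimes t^{m}}$, $c_{1}\mapsto \overline{K_{1}\otimes 1}$, $c_{2}\mapsto K_{2}$, and to verify that it is a well-defined isomorphism of Lie algebras. First I would pin down a basis of the covariant algebra. Since $\sigma_{r}$ shifts the second index, the $\Gamma$-orbits on $\mathcal{A}^{*}$ are the families $\{G_{\alpha,r}\mid r\in\Z\}$ (one for each $\alpha$) together with the fixed vector $K_{1}$; choosing the representatives $G_{\alpha,0}$ and $K_{1}$, the covariance relation of the construction of \cite{L07}, \cite{G-KK98} reads $\overline{\sigma_{r}(a)\otimes t^{j}}=\chi_{q}(\sigma_{r})^{-j}\,\overline{a\otimes t^{j}}=q^{-rj}\,\overline{a\otimes t^{j}}$. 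In particular $\overline{G_{\alpha,r}\otimes t^{j}}=q^{-rj}\,\overline{G_{\alpha,0}\otimes t^{j}}$, while $\overline{K_{1}\otimes t^{j}}=q^{-rj}\,\overline{K_{1}\otimes t^{j}}$ forces $\overline{K_{1}\otimes t^{j}}=0$ for $j\neq 0$ (as $q$ is generic). Hence $\{\,\overline{G_{\alpha,0}\otimes t^{m}}\mid \alpha,m\in\Z\,\}\cup\{\overline{K_{1}\otimes 1},K_{2}\}$ is a basis, matching the basis $\{E_{k,l}\}\cup\{c_{1},c_{2}\}$ of $\widetilde{V}_{q}$, so $\Theta$ is a linear bijection. (The decoupled central vector $\overline{G_{0,0}\otimes 1}$ corresponds to $E_{0,0}$; as it will turn out to be central and to occur only with coefficient $q^{0}-q^{0}=0$, the convention $E_{0,0}=0$ is compatible and it may be discarded on both sides.)

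Next I would verify that $\Theta$ preserves brackets on these generators. The essential point is that the covariant bracket is obtained by summing the affine Lie brackets of the $\Gamma$-translates of the first argument, weighted by the character; concretely
\begin{align*}
[E_{\alpha,m},E_{\beta,n}]=&\sum_{r\in\Z}\chi_{q}(\sigma_{r})^{m}\,\overline{[\sigma_{r}(G_{\alpha,0}),G_{\beta,0}]_{\mathcal{A}^{*}}\otimes t^{m+n}}\\
&+m\sum_{r\in\Z}\chi_{q}(\sigma_{r})^{m}\langle \sigma_{r}(G_{\alpha,0}),G_{\beta,0}\rangle\delta_{m+n,0}K_{2}.
\end{align*}
Here the $\mathcal{A}$-part of $[\sigma_{r}(G_{\alpha,0}),G_{\beta,0}]=[G_{\alpha,r},G_{\beta,0}]$ equals $\delta_{\alpha+\beta,r}G_{\alpha+\beta,\alpha}-\delta_{\alpha+\beta,-r}G_{\alpha+\beta,-\alpha}$, so the two Kronecker deltas collapse the sum to $r=\alpha+\beta$ and $r=-(\alpha+\beta)$, producing $q^{(\alpha+\beta)m}G_{\alpha+\beta,\alpha}-q^{-(\alpha+\beta)m}G_{\alpha+\beta,-\alpha}$. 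Reducing the second indices via the covariance relation, $\overline{G_{\alpha+\beta,\pm\alpha}\otimes t^{m+n}}=q^{\mp\alpha(m+n)}E_{\alpha+\beta,m+n}$, and collecting exponents gives exactly $(q^{\beta m-\alpha n}-q^{\alpha n-\beta m})E_{\alpha+\beta,m+n}$, the leading term of (\ref{eq:dif}) under $(k,l)=(\alpha,m)$, $(r,s)=(\beta,n)$.

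It then remains to match the central terms. The cocycle contributes $\psi(G_{\alpha,r},G_{\beta,0})K_{1}=\alpha\delta_{\alpha+\beta,0}\delta_{r,0}K_{1}$, which survives only at $r=0$ and, after using $\overline{K_{1}\otimes t^{m+n}}=\delta_{m+n,0}\,c_{1}$, yields $\alpha\delta_{\alpha+\beta,0}\delta_{m+n,0}\,c_{1}$; likewise $\langle G_{\alpha,r},G_{\beta,0}\rangle=\delta_{\alpha+\beta,0}\delta_{r,0}$ picks out $r=0$ and gives $m\delta_{\alpha+\beta,0}\delta_{m+n,0}\,c_{2}$. Together these reproduce the term $\delta_{k,-r}\delta_{l,-s}(kc_{1}+lc_{2})$ of (\ref{eq:dif}). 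Since $\Theta$ carries a basis to a basis and respects all brackets, it is a Lie algebra isomorphism; the centrality of $c_{1}=\overline{K_{1}\otimes 1}$ and $c_{2}=K_{2}$ follows from the same formula with one argument of $t$-degree $0$.

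I expect the main obstacle to be the bracket bookkeeping in the middle step: one must keep straight which second subscript of $G_{\alpha+\beta,\cdot}$ is produced by each Kronecker delta and track three separate powers of $q$ — the weight $\chi_{q}(\sigma_{r})^{m}$ from the $\Gamma$-summation and the two covariance factors $q^{\mp\alpha(m+n)}$ — confirming they combine into the symmetric pair $\pm(\beta m-\alpha n)$ rather than into an off-by-$q^{m}$ variant. A secondary check I would not skip is the self-consistency of the covariant bracket on the quotient: replacing the representative $G_{\alpha,0}$ by a translate $G_{\alpha,s}$ must rescale both sides by $q^{-sm}$, which is exactly what shows that the weight depends only on the $t$-degree of the first argument and that the formula descends from $\widehat{\mathcal{A}^{*}}$ to $\widehat{\mathcal{A}^{*}}[\Z]$.
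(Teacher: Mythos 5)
Your proposal is correct and follows essentially the same route as the paper's proof: both invoke the description of $\widehat{\mathcal{A}^{*}}[\Z]$ from Proposition 4.4 of \cite{L07}, use the covariance relation $\overline{\sigma_{r}(a)\otimes t^{m}}=q^{-mr}\,\overline{a\otimes t^{m}}$ to identify the basis $\{\overline{G_{\alpha,0}\otimes t^{m}}\}\cup\{\overline{K_{1}\otimes 1},K_{2}\}$, and verify the bracket by collapsing the $\Gamma$-sum with the Kronecker deltas, with exponent bookkeeping matching the paper's computation exactly. Your parenthetical remark about the vector $\overline{G_{0,0}\otimes 1}$ versus the convention $E_{0,0}=0$ is a point the paper glosses over, but it does not change the argument.
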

\begin{proof}
	From the definition, $\widetilde{V}_{q}$ has a basis
$\{E_{\alpha,m}, c_{1},c_{2}\ | \ \alpha,m\in\Z\}$ with the given bracket relations. On the other hand, from Proposition 4.4 of \cite{L07}, $\widehat{\mathcal{A}^{*}}[\Z]$ as a vector space is the quotient space of $\widehat{\mathcal{A}^{*}}$, modulo the subspace linearly spanned by
	$$\sigma_{r}(a)\otimes t^{m}-q^{-mr}(a\otimes t^{m})
\;\;\mbox{for}\;a\in\mathcal{A}^{*}, r,m\in\Z,$$
and its bracket relation is given by
	\begin{eqnarray}
		[\overline{a\otimes t^{m}},\overline{b\otimes t^{n}}]_{\Gamma}
		=\sum_{r\in\Z}q^{mr}\Big(\overline{[\sigma_{r}(a),b]\otimes t^{m+n}}+m\langle\sigma_{r}(a),b\rangle\delta_{m+n,0}K_{2}\Big),
	\end{eqnarray}
	where $\overline{a\otimes t^{m}}$ denotes the image of $a\otimes t^{m}$ in $\widehat{\mathcal{A}^{*}}[\Z]$
	for $a\in \mathcal{A}^{*}$, $m\in\Z$, and $K_{2}$ is identified with its image.
	Note that we have ($q$ is generic)
	\begin{eqnarray}
		\overline{G_{\alpha+\beta,-\alpha}\otimes t^{m+n}}
		=q^{(m+n)\alpha}\overline{G_{\alpha+\beta,0}\otimes t^{m+n}},
	\end{eqnarray}
	\begin{eqnarray}
		\overline{G_{\alpha+\beta,\alpha}\otimes t^{m+n}}
		=q^{-(m+n)\alpha}\overline{G_{\alpha+\beta,0}\otimes t^{m+n}},
	\end{eqnarray}
	\begin{eqnarray}
		\overline{K_{1}\otimes t^{m+n}}=\delta_{m+n,0}\overline{K_{1}\otimes 1}.
	\end{eqnarray}
	We see that $K_{2}$, $\overline{K_{1}\otimes 1}$ and $\overline{G_{\alpha,0}\otimes t^{m}}$ ($\alpha,m\in\Z$) form a bases of $\widehat{\mathcal{A}^{*}}[\Z]$.
	Let $\alpha,\beta,m,n\in\Z$, then
	\begin{eqnarray}
		&&{}[\overline{G_{\alpha,0}\otimes t^{m}},\overline{G_{\beta,0}\otimes t^{n}}]_{\Gamma}
		\nonumber\\
		&&{}
		=q^{m(\alpha+\beta)}\overline{G_{\alpha+\beta,\alpha}\otimes t^{m+n}}-
		q^{-m(\alpha+\beta)}\overline{G_{\alpha+\beta,-\alpha}\otimes t^{m+n}}
		\nonumber\\
		&&{}
		\;\;\;\;+\alpha\delta_{\alpha+\beta,0}\overline{K_{1}\otimes t^{m+n}}+m\delta_{\alpha+\beta,0}\delta_{m+n,0}K_{2}
		\nonumber\\
		&&{}
		=(q^{m\beta-n\alpha}-	q^{n\alpha-m\beta})\overline{G_{\alpha+\beta,0}\otimes t^{m+n}}
+\alpha\delta_{\alpha+\beta,0}\delta_{m+n,0}\overline{K_{1}\otimes 1}
		\nonumber\\
		&&{}
		\;\;\;\;+m\delta_{\alpha+\beta,0}\delta_{m+n,0}K_{2}.
	\end{eqnarray}
	It follows that $\widetilde{V}_{q}$ is isomorphic to $\widehat{\mathcal{A}^{*}}[\Z]$ with
	$E_{\alpha,m}$ corresponding to $\overline{G_{\alpha,0}\otimes t^{m}}$ for $\alpha,m\in\Z$,
	$c_{1}$ corresponding to $\overline{K_{1}\otimes 1}$ and $c_{2}$ corresponding to $K_{2}$.
\end{proof}

For $k\in\mathbb{Z}$, form a generating function
$$E_{k}(x)=\sum_{l\in\mathbb{Z}}E_{k,l}x^{-l-1}.$$
Then the defining relation (\ref{eq:dif}) can be written as
\begin{eqnarray}
&&{}[E_{k}(x_{1}),E_{r}(x_{2})]\nonumber\\
&&{}=
q^{k}E_{k+r}(q^{k}x_{2})x_{1}^{-1}\delta\Big(\frac{q^{k+r}x_{2}}{x_{1}}\Big)
- q^{-k}E_{k+r}(q^{-k}x_{2})x_{1}^{-1}\delta\Big(\frac{q^{-k-r}x_{2}}{x_{1}}\Big)
\nonumber \\
&&{}\;\;\;\;\;
+k\delta_{k,-r}x_{1}^{-1}x_{2}^{-1}\delta\Big(\frac{x_{2}}{x_{1}}\Big)c_{1}
+
\delta_{k,-r}\frac{\partial}{\partial x_{2}}x_{1}^{
	-1}\delta\Big(\frac{x_{2}}{x_{1}}\Big)c_{2}
\end{eqnarray}
for $k,r\in\mathbb{Z}$.

\begin{definition}
{\em
	A $\widetilde{V}_{q}$-module $W$ is said to be {\em restricted} if for any $w\in W$, $k\in\mathbb{Z}$, $E_{k,l}w=0$ for $l$ sufficiently large, or equivalently,
$E_{k}(x)\in\mathcal{E}(W)$ for any $k\in\Z$.
We say a $\widetilde{V}_{q}$-module $W$ is of {\em level} $\ell_{12}$
if the central elements $c_{1},c_{2}$ act as scalars $\ell_{1},\ell_{2}\in\mathbb{C}$.
}
\end{definition}

\begin{rmk}
\label{gam}
{\em Let $V$ be a $\Z$-graded vertex algebra.
Denote by $L(0)$ the degree operator on $V$.
Let $\Gamma$ be an automorphism group of the $\Z$-graded vertex algebra $V$ and
let $\chi: \Gamma \rightarrow \mathbb{C}^{\times}$ be a group homomorphism.
Then  $V$ becomes a $\Gamma$-vertex algebra
with $R_{g}=\chi(g)^{-L(0)}g$ for $g\in \Gamma$
(see \cite{L06}, \cite{L07} for the details).}
\end{rmk}

By Remark \ref{gam}, $V_{\widehat{\mathcal{A}^{*}}}(\ell_{2},0)$
becomes a $\Gamma$-vertex algebra with $\Gamma=\Z$ and
$R_{r}=q^{-rL(0)}\sigma_{r}$
for $r\in\Z$.

Now we give a connection between
affine vertex algebra $V_{\widehat{\mathcal{A}^{*}}}(\ell_{2},0)$
and restricted $\widetilde{V}_{q}$-modules.
\begin{thm}
\label{c1=0}
	Assume that $q$ is not a root of unity and let $ \ell_{2}\in\C$.
Then for any restricted $\widetilde{V}_{q}$-module $W$ of level $\ell_{12}$ with
$\ell_{1}=0$,
there exists an equivariant quasi $V_{\widehat{\mathcal{A}^{*}}}(\ell_{2},0)$-module structure $Y_{W}(\cdot,x)$ on $W$, which is uniquely determined by
	\begin{eqnarray}
	Y_{W}(K_{1},x)=0,\;\;	Y_{W}(G_{\alpha,m},x)=q^{m}E_{\alpha}(q^{m}x)\;\;\mbox{for}\;\alpha,m\in\Z.
	\end{eqnarray}
	On the other hand, for any equivariant quasi $V_{\widehat{\mathcal{A}^{*}}}(\ell_{2},0)$-module $(W,Y_{W})$ such that $Y_{W}(K_{1},x)=0$,
$W$ becomes a restricted $\widetilde{V}_{q}$-module of level $\ell_{12}$ with $\ell_{1}=0$ and
	\begin{eqnarray}
		E_{\alpha}(x)=Y_{W}(G_{\alpha,0},x)\;\;\mbox{for}\;\alpha\in\Z.
	\end{eqnarray}
\end{thm}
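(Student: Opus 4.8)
The plan is to deduce the statement from the general correspondence between restricted modules of a covariant Lie algebra and equivariant quasi-modules of the associated $\Gamma$-vertex algebra (Theorem 4.9 of \cite{L07}), combined with the identification supplied by Proposition \ref{iso2}. That proposition realizes $\widetilde{V}_q$ as the $(\Z,\chi_q)$-covariant algebra $\widehat{\mathcal{A}^*}[\Z]$ via $E_{\alpha,m}\mapsto \overline{G_{\alpha,0}\otimes t^m}$, $c_1\mapsto\overline{K_1\otimes 1}$, $c_2\mapsto K_2$. Since $q$ is not a root of unity, $\chi_q:r\mapsto q^r$ is injective, so the hypotheses of the general theory hold; and by Remark \ref{gam}, $V_{\widehat{\mathcal{A}^*}}(\ell_2,0)$ carries the $\Gamma$-vertex algebra structure $R_r=q^{-rL(0)}\sigma_r$ relative to which equivariant quasi-modules are defined.

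First I would feed the defining relations into Li's construction theorem. Starting from a restricted $\widetilde{V}_q$-module $W$ of level $\ell_{12}$ with $\ell_1=0$, I would form the fields $q^mE_\alpha(q^mx)$ (for $G_{\alpha,m}$) together with the zero field (for $K_1$), and check that this set is quasi-local in the sense of \cite{L06},\cite{L07}. This is read off directly from the delta-function form of the commutator formula displayed above: its right-hand side is supported on the diagonals $x_1=q^{\pm(k+r)}x_2$ and $x_1=x_2$, so multiplication by a suitable polynomial in $x_1,x_2$ with zeros on $q^{\Z}x_2$ annihilates the bracket, which is exactly quasi-locality with respect to $\mathcal{S}=q^{\Z}$. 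The general theorem then produces a unique equivariant quasi-module structure $Y_W$ generated by these fields.

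The explicit formula for $Y_W$ is a formal consequence of equivariance. Since $\sigma_m(G_{\alpha,0})=G_{\alpha,m}$ and $G_{\alpha,m}(-1){\bf 1}$ has $L(0)$-degree $1$, we get $R_m(G_{\alpha,0})=q^{-m}G_{\alpha,m}$ in $V_{\widehat{\mathcal{A}^*}}(\ell_2,0)$, i.e. $G_{\alpha,m}=q^mR_m(G_{\alpha,0})$. Applying the equivariance property, which in the present normalization reads $Y_W(R_rv,x)=Y_W(v,\chi_q(r)x)$, with $r=m$ and $v=G_{\alpha,0}$ gives
$$Y_W(G_{\alpha,m},x)=q^mY_W(R_mG_{\alpha,0},x)=q^mY_W(G_{\alpha,0},q^mx)=q^mE_\alpha(q^mx),$$
and the case $m=0$ recovers $E_\alpha(x)=Y_W(G_{\alpha,0},x)$, as claimed.

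The condition on $K_1$ is what forces $\ell_1=0$, and tracking it is the one point demanding care. In the covariant dictionary the relation $\overline{K_1\otimes t^{m+n}}=\delta_{m+n,0}\,\overline{K_1\otimes 1}=\delta_{m+n,0}\,c_1$ shows that $Y_W(K_1,x)=0$ is equivalent to $c_1$ acting as $0$, i.e. $\ell_1=0$; conversely $\ell_1=0$ forces every component of $Y_W(K_1,x)$ to vanish. I expect the main obstacle to lie here, in the bookkeeping between the genuine vertex-algebra field $Y(K_1,x)$ on $V_{\widehat{\mathcal{A}^*}}(\ell_2,0)$ (which is nonzero, $K_1$ being a state rather than a scalar) and its image on $W$: one must confirm that imposing the vanishing of this image is consistent with the quasi-module axioms and is precisely the hypothesis that lets the structure descend. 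Once this is settled, the two assignments are mutually inverse by the uniqueness clause of the construction theorem, giving both directions.
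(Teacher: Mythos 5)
Your proposal is correct and follows essentially the same route as the paper: identify $\widetilde{V}_{q}$ with the covariant algebra $\widehat{\mathcal{A}^{*}}[\Z]$ via Proposition \ref{iso2}, invoke Theorem 4.9 of \cite{L07} (using injectivity of $\chi_{q}$), and track the relation $\overline{K_{1}\otimes t^{n}}=\delta_{n,0}\,c_{1}$ to see that $Y_{W}(K_{1},x)=0$ is exactly the condition $\ell_{1}=0$. The only cosmetic difference is that you extract the formula $Y_{W}(G_{\alpha,m},x)=q^{m}E_{\alpha}(q^{m}x)$ from the equivariance axiom $Y_{W}(R_{m}v,x)=Y_{W}(v,q^{m}x)$, whereas the paper reads it off from the identity $\overline{G_{\alpha,m}\otimes t^{n}}=q^{-nm}\,\overline{G_{\alpha,0}\otimes t^{n}}$ inside the covariant algebra; these are equivalent.
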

\begin{proof}
	Let $W$ be a restricted $\widetilde{V}_{q}$-module of level $\ell_{12}$ with $c_{1}$ acts as 0.
In view of Proposition \ref{iso2},
$W$ is a restricted $\widehat{\mathcal{A}^{*}}[\Z]$-module
with $\overline{K_{1}\otimes 1}$ acts as 0, $K_{2}$ acts as $\ell_{2}$,
 and with
	$$E_{\alpha,m}=\overline{G_{\alpha,0}\otimes t^{m}}\;\;\mbox{for}\;\alpha,m\in\Z.$$
	Note that $q$ is not a root of unity, $\chi_{q}:\Z\longrightarrow\C^{\times}$ is one-to-one. By Theorem 4.9 of \cite{L07}, there exists an quasi $V_{\widehat{\mathcal{A}^{*}}}(\ell_{2},0)$-module structure $Y_{W}(\cdot,x)$ on $W$, which is uniquely determined by
$$Y_{W}(K_{1},x)=\overline{K_{1}(x)}
(=\sum_{n\in\Z}\overline{K_{1}\otimes t^{n}}x^{-n-1}
=\overline{K_{1}\otimes 1}x^{-1})=0,$$
	$$Y_{W}(G_{\alpha,m},x)=\overline{G_{\alpha,m}(x)}
(=\sum_{n\in\Z}\overline{G_{\alpha,m}\otimes t^{n}}x^{-n-1})\;\;\mbox{for}\;\alpha,m\in\Z.$$
	For $\alpha,m\in\Z$, we have
	$$Y_{W}(G_{\alpha,m},x)=\overline{G_{\alpha,m}(x)}
	=\overline{\sigma_{m}(G_{\alpha,0})(x)}
	=q^{m}\overline{G_{\alpha,0}(q^{m}x)}
	=q^{m}E_{\alpha}(q^{m}x).$$
Hence
$$Y_{W}(\sigma_{r}(G_{\alpha,m}),x)=Y_{W}(q^{r}G_{\alpha,m},q^{r}x)\;\;\mbox{for any}\;r\in\Z.$$
Consequently, there eixts an equivariant quasi $V_{\widehat{\mathcal{A}^{*}}}(\ell_{2},0)$-module structure $Y_{W}(\cdot,x)$ on $W$
such that
$$Y_{W}(K_{1},x)=0,\;\;	Y_{W}(G_{\alpha,m},x)=q^{m}E_{\alpha}(q^{m}x)\;\;\mbox{for}\;\alpha,m\in\Z.$$
	The other direction follows from Proposition \ref{iso2}, and Theorem 4.9 of \cite{L07}.
\end{proof}

\begin{rmk}
{\em Theorem \ref{c1=0} holds for all $\ell_{1}\in\C$ if we require
a less natural condition that $Y_{W}(K_{1},x)=\ell_{1}x^{-1}$ (so that
$\overline{K_{1}\otimes 1}$ acts as $\ell_{1}$).
}
\end{rmk}

\section{Vertex algebra $V_{\widetilde{\mathcal{A}}}(\ell_{12},0)$ and $\widetilde{V_{q}}$}
\def\theequation{4.\arabic{equation}}
\label{sec:4}
\setcounter{equation}{0}

In order to naturally relate restricted $\widetilde{V_{q}}$-modules of level $\ell_{12}$ for any complex numbers $\ell_{1}$, $\ell_{2}$ to vertex algebras and their corresponding modules, in this section, we consider $\phi$-coordinated quasi modules for vertex algebras.
 	More precisely, we show that restricted $\widetilde{V}_{q}$-module of level $\ell_{12}$ are in one-to-one correspondence to $\mathbb{Z}$-equivariant $\phi$-coordinated quasi modules for the vertex algebra $V_{\widetilde{\mathcal{A}}}(\ell_{12},0)$.

 For the definitions and related results about ($\Z$-equivariant) $\phi$-coordinated quasi modules, we refer to \cite{L11} and \cite{L13}. Set $\phi=\phi(x,z)=xe^{z}\in\C((x))[[z]]$, which is fixed
 throughout this section.

 For $k\in\mathbb{Z}$, we modify the generating function $E_{k}(x)$
   by
 	$$\widehat{E}_{k}(x)=xE_{k}(x)=\sum_{l\in\mathbb{Z}}E_{k,l}x^{-l}.$$
 	Then, for any $k,r\in\mathbb{Z}$, we have
 \begin{eqnarray}
 		[\widehat{E}_{k}(x_{1}),\widehat{E}_{r}(x_{2})]
 		&=&
 \widehat{E}_{k+r}(q^{k}x_{2})\delta\Big(\frac{q^{k+r}x_{2}}{x_{1}}\Big)
 -\widehat{E}_{k+r}(q^{-k}x_{2})\delta\Big(\frac{q^{-k-r}x_{2}}{x_{1}}\Big)
   \nonumber \\
 		&&{}\;+k\delta_{k,-r}\delta\Big(\frac{x_{2}}{x_{1}}\Big)c_{1}
 +
 		\delta_{k,-r}x_{2}\frac{\partial}{\partial x_{2}}
 \delta\Big(\frac{x_{2}}{x_{1}}\Big)c_{2}.   \label{eq:com1}
 	\end{eqnarray}

Furthermore, let
$$\widetilde{E}_{k,m}(x)=\widehat{E}_{k}(q^{m}x)\;\;\mbox{for}\;k,m\in\mathbb{Z}.$$
Then
\begin{eqnarray}
	[\widetilde{E}_{k,m}(x_{1}),\widetilde{E}_{r,n}(x_{2})]
	&=&\widetilde{E}_{k+r,n+k}(x_{2})\delta\Big(\frac{q^{-m+n+k+r}}{x_{1}}\Big)
-
\widetilde{E}_{k+r,n-k}(x_{2})\delta\Big(\frac{q^{-m+n-k-r}}{x_{1}}\Big)
  \nonumber \\
	&&\;+k\delta_{k,-r}\delta\Big(\frac{q^{n-m}x_{2}}{x_{1}}\Big)c_{1}
+
	\delta_{k,-r}x_{2}\frac{\partial}{\partial x_{2}}\delta\Big(\frac{q^{n-m}x_{2}}{x_{1}}\Big)c_{2}   \label{eq:com2}
\end{eqnarray}
for any $k,m,r,n\in\mathbb{Z}$.

For $k,m\in\Z$, recall the formal operator
$$G_{k,m}(x)=\sum_{i\in\mathbb{Z}}(G_{k,m}\otimes t^{i})x^{-i-1}.$$

Then in $\widetilde{\mathcal{A}}$, for any $k,m,r,n\in\Z$, we have
\begin{eqnarray}
	&&{}[G_{k,m}(x_{1}),G_{r,n}(x_{2})]
	\nonumber\\
	&&{}
	=\delta_{-m+n+k+r,0}G_{k+r,n+k}(x_{2})x_{1}^{-1}\delta\Big(\frac{x_{2}}{x_{1}}\Big)
	-\delta_{-m+n-k-r,0}G_{k+r,n-k}(x_{2})x_{1}^{-1}
	\delta\Big(\frac{x_{2}}{x_{1}}\Big)
	\nonumber\\
	&&{}
	\;\;\;+k\delta_{k,-r}\delta_{n-m,0}x_{1}^{-1}\delta\Big(\frac{x_{2}}{x_{1}}\Big)K_{1}
	+\delta_{k,-r}\delta_{n-m,0}\frac{\partial}{\partial x_{2}}x_{1}^{-1}\delta\Big(\frac{x_{2}}{x_{1}}\Big)K_{2}. \label{eq:newcom}
\end{eqnarray}

 For any $r\in\Z$,
 define a linear map $\tau_{r}$ on $\widetilde{\mathcal{A}}$
 by
 $\tau_{r}(G_{k,m}\otimes t^{i})=G_{k,m+r}\otimes t^{i}$,
 $\tau_{r}(K_{j})= K_{j},$
for $k,m,i\in\Z$, $j=1,2$.
Then $\Gamma=\{\tau_{r}\ | \ r\in\Z\}\cong\Z$ is an automorphism
group of the Lie algebra $\widetilde{\mathcal{A}}$.
 It then gives rise to an automorphism group of the vertex algebra $V_{\widetilde{\mathcal{A}}}(\ell_{12},0)$.

On the one hand, we have:
\begin{thm}
	Let $W$ be a restricted $\widetilde{V}_{q}$-module of level $\ell_{12}$. Then there exists a $\mathbb{Z}$-equivariant $\phi$-coordinated quasi $V_{\widetilde{\mathcal{A}}}(\ell_{12},0)$-module structure $Y_{W}(\cdot, x)$
	on $W$, which is uniquely determined by
	$$Y_{W}(G_{k,m},x)=\widetilde{E}_{k,m}(x)\;\;
	\mbox{for}\;k,m\in\mathbb{Z}.$$
\end{thm}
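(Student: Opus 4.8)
The plan is to produce $Y_W$ by verifying that the fields $\widetilde{E}_{k,m}(x)$ form a $\Z$-equivariant, quasi-local generating family on $W$ and then invoking the conceptual construction of $\phi$-coordinated quasi modules from \cite{L11} and \cite{L13}. First I would check that each $\widetilde{E}_{k,m}(x)$ lies in $\mathcal{E}(W)$: since $W$ is restricted, $E_{k,l}w=0$ for $l$ sufficiently large, so $\widehat{E}_{k}(x)=\sum_{l}E_{k,l}x^{-l}\in\mathcal{E}(W)$, and $\widetilde{E}_{k,m}(x)=\widehat{E}_{k}(q^{m}x)$ inherits this. Next I would record the $\Z$-equivariance: the identity $\widetilde{E}_{k,m}(q^{r}x)=\widehat{E}_{k}(q^{m+r}x)=\widetilde{E}_{k,m+r}(x)$ says precisely that scaling $x\mapsto q^{r}x$ implements the automorphism $\tau_{r}$ (equivalently the operator $R_{r}=q^{-rL(0)}\tau_{r}$ defining the $\Gamma$-vertex-algebra structure on $V_{\widetilde{\mathcal{A}}}(\ell_{12},0)$), which is the equivariance axiom for the set $U=\{\widetilde{E}_{k,m}(x)\mid k,m\in\Z\}$.

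The second step is quasi-locality. Reading off (\ref{eq:com2}), the bracket $[\widetilde{E}_{k,m}(x_{1}),\widetilde{E}_{r,n}(x_{2})]$ is a finite sum of terms, each supported on one of the diagonals $x_{1}=q^{j}x_{2}$ with $j\in\{-m+n+k+r,\ -m+n-k-r,\ n-m\}$, the central $c_{2}$-term carrying a single derivative of a delta on the diagonal $x_{1}=q^{n-m}x_{2}$. Hence multiplying by
\[
(x_{1}-q^{-m+n+k+r}x_{2})(x_{1}-q^{-m+n-k-r}x_{2})(x_{1}-q^{n-m}x_{2})^{2}
\]
annihilates the commutator, and each factor individually clears the corresponding singularity so that the product of the two fields becomes an element of $\mathrm{Hom}(W,W((x_{1},x_{2})))$ after multiplication. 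This shows $U$ is $\Z$-quasi-local in the $\phi$-coordinated sense, so by \cite{L11}, \cite{L13} the set $U$ generates a vertex algebra $\langle U\rangle$ for which $W$ carries a canonical $\Z$-equivariant $\phi$-coordinated quasi module structure.

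It then remains to identify the assignment $G_{k,m}\mapsto\widetilde{E}_{k,m}(x)$ with a $\phi$-coordinated quasi module for the specific vertex algebra $V_{\widetilde{\mathcal{A}}}(\ell_{12},0)$. Since that vertex algebra is generated by the $G_{k,m}$ subject to (\ref{eq:newcom}), I would construct the module map by checking that the $\phi$-coordinated images of the relations (\ref{eq:newcom}) are exactly (\ref{eq:com2}), and then appeal to the universal property. The main obstacle is precisely this matching: under the coordinate $\phi(x,z)=xe^{z}$ the $\phi$-coordinated commutator formula sends $G_{k+r,n+k}(x_{2})\,x_{1}^{-1}\delta(x_{2}/x_{1})$ to $\widetilde{E}_{k+r,n+k}(x_{2})\,\delta(q^{-m+n+k+r}x_{2}/x_{1})$ and the derivative term $\partial_{x_{2}}x_{1}^{-1}\delta(x_{2}/x_{1})$ to $x_{2}\partial_{x_{2}}\delta(q^{n-m}x_{2}/x_{1})$ --- note the logarithmic derivative $x_{2}\partial_{x_{2}}$ characteristic of $\phi=xe^{z}$, and the way the Kronecker constraints $\delta_{-m+n\pm(k+r),0}$ and $\delta_{n-m,0}$ of (\ref{eq:newcom}) are absorbed into the $q$-shifted arguments of the delta functions (which is exactly why a quasi module, rather than an honest module, appears). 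Carrying out this $\phi$-coordinated delta-function bookkeeping, with the $q$-powers tracked through the quasi-module character $\chi_{q}$, is the computational heart of the argument; once it is verified, uniqueness of $Y_W$ follows immediately because the $G_{k,m}$ generate $V_{\widetilde{\mathcal{A}}}(\ell_{12},0)$.
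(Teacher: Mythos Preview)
Your proposal is correct and follows essentially the same route as the paper: quasi-locality via the polynomial $(x_{1}-q^{-m+n+k+r}x_{2})(x_{1}-q^{-m+n-k-r}x_{2})(x_{1}-q^{n-m}x_{2})^{2}$, invocation of \cite{L11,L13} to generate a vertex algebra $\langle U_W\rangle_e$ with $W$ as $\phi$-coordinated quasi module, the delta-function bookkeeping to match (\ref{eq:com2}) with (\ref{eq:newcom}), and uniqueness from generation. The only organizational difference is that the paper makes the ``universal property'' step concrete by first extracting the $n$-th products $\widetilde{E}_{k,m}(x)^e_{s}\widetilde{E}_{r,n}(x)$ from (\ref{eq:com2}) via Lemma~4.13 of \cite{L13}, using Borcherds' commutator formula to recognize $\langle U_W\rangle_e$ as a restricted $\widetilde{\mathcal{A}}$-module of level $\ell_{12}$, and then building the vertex-algebra homomorphism $\rho:V_{\widetilde{\mathcal{A}}}(\ell_{12},0)\to\langle U_W\rangle_e$ as the induced $\widetilde{\mathcal{A}}$-module map; it also postpones equivariance to the end, checking it on generators and citing Lemma~4.21 of \cite{L13}, whereas you front-load that observation.
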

\begin{proof}
	Since $T=\{\ G_{k,m}, {\bf 1}\ | \ k,m\in\Z\}$ generates $V_{\widetilde{\mathcal{A}}}(\ell_{12},0)$ as a vertex algebra, the uniqueness is clear. We now establish the existence.
	Set
	$$U_{W}=\{{\bf1}_{W}, \widetilde{E}_{k,m}(x)\ | \ (k,m)\in\mathbb{Z}^{2}\}\subset \mathcal{E}(W).$$
	Let $(k,m),(r,n)\in\mathbb{Z}^{2}$.
From (\ref{eq:com2}) we have
\begin{eqnarray}
(x_{1}-q^{-m+n+k+r}x_{2})(x_{1}-q^{-m+n-k-r}x_{2})(x_{1}-q^{n-m}x_{2})^{2}
[\widetilde{E}_{k,m}(x_{1}),\widetilde{E}_{r,n}(x_{2})]=0.
\end{eqnarray}
So $U_{W}$ is a quasi local subset of $\mathcal{E}(W)$.
 In view of Theorem 5.4 of \cite{L11} or Theorem 4.10 of \cite{L13},
 $U_{W}$ generates a vertex algebra $\left\langle U_{W}\right\rangle_{e}$
 under the vertex operation
	$Y_{\mathcal{E}}^{e}$
 with $W$ a $\phi$-coordinated quasi module, where
	$Y_{W}(a(x),z)=a(z)\;\;\mbox{for}\;a(x)\in\left\langle U_{W}\right\rangle_{e}.$
	
	By Lemma 4.13 of \cite{L13}, from (\ref{eq:com2}), we have
	$$\widetilde{E}_{k,m}(x)_{n}^{e}\widetilde{E}_{r,n}(x)=0\;\;\mbox{for}\;n\geq 2,$$	$$\widetilde{E}_{k,m}(x)_{1}^{e}\widetilde{E}_{r,n}(x)
=\delta_{k,-r}\delta_{n-m,0}\ell_{2}{\bf 1}_{W},$$
	\begin{eqnarray*}
		&&{}\widetilde{E}_{k,m}(x)_{0}^{e}\widetilde{E}_{r,n}(x)=
\delta_{-m+n+k+r,0}\widetilde{E}_{k+r,n+k}(x)
		\\
		&&{} \;\;-\delta_{-m+n-k-r,0}\widetilde{E}_{k+r,n-k}(x)+k\delta_{k,-r}\delta_{n-m,0}\ell_{1}{\bf 1}_{W}.
	\end{eqnarray*}
	Then by Borcherds' commutator formula we have
	\begin{eqnarray}
		&&{}[Y_{\mathcal{E}}^{e}(\widetilde{E}_{k,m}(x),x_{1}),Y_{\mathcal{E}}^{e}(\widetilde{E}_{r,n}(x),x_{2})]
		\nonumber\\
		&&{}
		=\sum_{n\geq 0}Y_{\mathcal{E}}^{e}(\widetilde{E}_{k,m}(x)_{n}^{e}\widetilde{E}_{r,n}(x),x_{2})
		\frac{1}{n!}\Big(\frac{\partial}{\partial x_{2}}\Big)^{n}x_{1}^{-1}\delta\Big(\frac{x_{2}}{x_{1}}\Big)
		\nonumber\\
		&&{}
		=\left(
\delta_{-m+n+k+r,0}Y_{\mathcal{E}}^{e}(\widetilde{E}_{k+r,n+k}(x),x_{2})
-\delta_{-m+n-k-r,0}Y_{\mathcal{E}}^{e}(\widetilde{E}_{k+r,n-k}(x),x_{2})\right)
x_{1}^{-1}\delta\Big(\frac{x_{2}}{x_{1}}\Big)
		\nonumber\\
		&&{}
		\;\;\;+k\delta_{k,-r}\delta_{n-m,0}x_{1}^{-1}\delta\Big(\frac{x_{2}}{x_{1}}\Big)\ell_{1}{\bf 1}_{W}
	+\delta_{k,-r}\delta_{n-m,0}\frac{\partial}{\partial x_{2}}x_{1}^{-1}\delta\Big(\frac{x_{2}}{x_{1}}\Big)\ell_{2}{\bf 1}_{W}.
	\end{eqnarray}
	With (\ref{eq:newcom}), we see that
	$\left\langle U_{W}\right\rangle_{e}$ is a
$\widetilde{\mathcal{A}}$-module
	of level $\ell_{12}$ with $G_{k,m}(z)$ acting as
	$Y_{\mathcal{E}}^{e}(\widetilde{E}_{k,m}(x),z)$ for $(k,m)\in\mathbb{Z}^{2}$.
%	Furthermore, we have $\widetilde{E}_{k,m}(x)_{n}^{e}{\bf 1}_{W}=0$
%	for $(k,m)\in\mathbb{Z}^{2}$, $n\geq 0$.
	
	Let
	$\rho:V_{\widetilde{\mathcal{A}}}(\ell_{12},0)\longrightarrow \left\langle U_{W}\right\rangle_{e}$
	be a $\widetilde{\mathcal{A}}$-module homomorphism with $\rho({\bf1})={\bf 1}_{W}$.
	Then for any $(k,m)\in\mathbb{Z}^{2}$,
	$n\in\mathbb{Z}$, $v\in V_{\widetilde{\mathcal{A}}}(\ell_{12},0)$, we have
	$$\rho((G_{k,m})_{n}v)=\widetilde{E}_{k,m}(x)^{e}_{n}\rho(v)=
	\rho(G_{k,m})_{n}\rho(v).$$
It follows that $\rho$ is a homomorphism of vertex algebras.
	Consequently, $W$ becomes a $\phi$-coordinated quasi $V_{\widetilde{\mathcal{A}}}(\ell_{12},0)$-module with
	$Y_{W}(G_{k,m},x)=\widetilde{E}_{k,m}(x)$,
	$(k,m)\in\mathbb{Z}^{2}.$
	
	Moreover, for any $k,m,d\in\Z$, we have
	$$
		Y_{W}(\sigma_{r}(G_{k,m}),x)=Y_{W}(G_{k,m+r},x)=\widetilde{E}_{k,m+r}(x)
=\widetilde{E}_{k,m}(q^{r}x)
=Y_{W}(G_{k,m},\chi_{q}(\sigma_{r})x),
$$
	and it is clear that $\{Y_{W}(v,x)\ | \ v\in T\}$ is $\chi_{q}(\mathbb{Z})$-quasi local.
	Then it follows from Lemma 4.21 of \cite{L13} that
	$(W, Y_{W})$ is a $\mathbb{Z}$-equivariant $\phi$-coordinated quasi $V_{\widetilde{\mathcal{A}}}(\ell_{12},0)$-module.
\end{proof}

On the other hand, we have the following theorem.
\begin{thm}
	Let $W$ be
	a $\mathbb{Z}$-equivariant $\phi$-coordinated quasi $V_{\widetilde{\mathcal{A}}}(\ell_{12},0)$-module.
	Then $W$ is a restricted $\widetilde{V}_{q}$-module of level $\ell_{12}$ with
	$$\widetilde{E}_{k,m}(x)=Y_{W}(G_{k,m},x)\;\;
	\mbox{for}\;k,m\in\mathbb{Z}.$$
\end{thm}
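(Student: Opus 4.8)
The plan is to invert the construction of the preceding theorem. Given a $\Z$-equivariant $\phi$-coordinated quasi $V_{\widetilde{\mathcal{A}}}(\ell_{12},0)$-module $(W,Y_W)$, I set $\widetilde{E}_{k,m}(x)=Y_W(G_{k,m},x)\in\mathcal{E}(W)$ for $k,m\in\Z$, the membership in $\mathcal{E}(W)$ being part of the definition of a $\phi$-coordinated quasi module. First I would exploit the $\Z$-equivariance of $W$: as in the previous theorem it gives $Y_W(G_{k,m},x)=Y_W(\tau_m G_{k,0},x)=Y_W(G_{k,0},\chi_q(\tau_m)x)=Y_W(G_{k,0},q^m x)$, since $\tau_m(G_{k,0})=G_{k,m}$ and $\chi_q(\tau_m)=q^m$. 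Setting $\widehat{E}_k(x):=Y_W(G_{k,0},x)=\sum_{l\in\Z}E_{k,l}x^{-l}$, this reads $\widetilde{E}_{k,m}(x)=\widehat{E}_k(q^m x)$ and simultaneously produces the candidate operators $E_{k,l}$ on $W$. Since $\widehat{E}_k(x)w\in W((x))$ for every $w\in W$, we obtain at once $E_{k,l}w=0$ for $l$ sufficiently large, i.e.\ the restrictedness condition.

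The heart of the argument is to recover the commutation relation (\ref{eq:com2}), for which I would apply the commutator formula for $\Z$-equivariant $\phi$-coordinated quasi modules (cf.\ \cite{L13}). Its input is the family of products of the generators in $V_{\widetilde{\mathcal{A}}}(\ell_{12},0)$, read off from (\ref{eq:newcom}) exactly as in the proof of the previous theorem: $(G_{k,m})_j G_{r,n}=0$ for $j\geq 2$, $(G_{k,m})_1 G_{r,n}=\delta_{k,-r}\delta_{n-m,0}\ell_2\mathbf{1}$, and $(G_{k,m})_0 G_{r,n}=\delta_{-m+n+k+r,0}G_{k+r,n+k}-\delta_{-m+n-k-r,0}G_{k+r,n-k}+k\delta_{k,-r}\delta_{n-m,0}\ell_1\mathbf{1}$. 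Feeding these into the commutator formula with $\Gamma=\Z$, $R_{\tau_r}=q^{-rL(0)}\tau_r$ and $\chi_q(\tau_r)=q^r$, the Kronecker deltas in the products $(R_{\tau_j}G_{k,m})_i G_{r,n}$ single out one value of $j$ for each surviving term, so the a priori infinite sum over $\Gamma$ collapses; meanwhile the choice $\phi=xe^z$ replaces the ordinary derivative of the $\delta$-function by $x_2\frac{\partial}{\partial x_2}$. The outcome is precisely (\ref{eq:com2}), with $c_1,c_2$ acting as the scalars $\ell_1,\ell_2$.

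Once (\ref{eq:com2}) is established, the remaining steps merely reverse the reductions of Section \ref{sec:4}. Substituting $\widetilde{E}_{k,m}(x)=\widehat{E}_k(q^m x)$ and specializing to $m=n=0$ turns (\ref{eq:com2}) into relation (\ref{eq:com1}) for $\widehat{E}_k(x_1)$ and $\widehat{E}_r(x_2)$; extracting the coefficient of $x_1^{-l}x_2^{-s}$ (equivalently, passing to the modes of $\widehat{E}_k(x)=\sum_l E_{k,l}x^{-l}$) then reproduces the defining bracket (\ref{eq:dif}) of $\widetilde{V}_q$, the central elements $c_1,c_2$ acting as $\ell_1,\ell_2$. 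Hence $W$ is a restricted $\widetilde{V}_q$-module of level $\ell_{12}$ with $\widetilde{E}_{k,m}(x)=Y_W(G_{k,m},x)$, as required. I expect the main obstacle to be the bookkeeping of the middle step: one must invoke the equivariant $\phi$-coordinated quasi-module commutator formula correctly, track the powers of $q$ coming from the $\Gamma$-action against those coming from $\phi$, and check that the collapsed sum reassembles exactly the shifted $\delta$-functions $\delta(q^{\cdots}x_2/x_1)$ together with the $\phi$-corrected central term of (\ref{eq:com2}).
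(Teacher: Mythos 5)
Your proposal is correct and follows essentially the same route as the paper: both use the $\Gamma=\Z$ equivariance to identify $Y_W(G_{k,m},x)$ with $Y_W(G_{k,0},q^m x)$, compute the nonnegative products $(\sigma_r(G_{k,m}))_s G_{l,n}$ from the bracket of $\widetilde{\mathcal{A}}$, and feed them into the commutator formula of \cite{L13} (Theorem 4.19, using injectivity of $\chi_q$) so that the sum over $\Gamma$ collapses to reproduce (\ref{eq:com2}), with restrictedness immediate from $Y_W(G_{k,m},x)\in\mathcal{E}(W)$.
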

\begin{proof}
	For $k,m,r\in\mathbb{Z}$, we have
	\begin{eqnarray*}
		&&{}Y_{W}(G_{k,m+r},x)=Y_{W}(\sigma_{r}(G_{k,m}),x)=
		Y_{W}(G_{k,m},\chi_{q}(\sigma_{r})x)=
		Y_{W}(G_{k,m}, q^{r}x).
	\end{eqnarray*}
	Let $(k,m),(r,n)\in\mathbb{Z}^{2}$.
For any $r\in\mathbb{Z}$, $s\geq 0$, there is
	\begin{eqnarray}
		&&{}(\sigma_{r}(G_{k,m}))_{s}G_{l,n}=(G_{k,m+r})_{s}G_{l,n}
		\nonumber\\
		&&{}
		=(G_{k,m+r})_{s}(G_{l,n})_{-1}{\bf 1}
		=[G_{k,m+r}\otimes t^{s}, G_{l,n}\otimes t^{-1}]{\bf 1}
		\nonumber\\
		&&{}
		=\delta_{s,0}[G_{k,m+r},G_{l,n}]_{-1}{\bf 1}
+\delta_{s,0}\psi(G_{k,m+r},G_{l,n})\ell_{1}{\bf 1} \nonumber\\
&&{}
\;\;\;\;+s\left\langle G_{k,m+r},G_{l,n}\right\rangle\delta_{s-1,0}\ell_{2}{\bf 1}.
	\end{eqnarray}
	Noticing that $\chi_{q}$ is injective ($q$ is not a root of unity),
	by Theorem 4.19 of \cite{L13}, we have
	\begin{eqnarray}
		&&{}[Y_{W}(G_{k,m},x_{1}), Y_{W}(G_{l,n},x_{2})]
		\nonumber\\
		&&{}
		=\mbox{Res}_{x_{0}}\sum_{r\in\mathbb{Z}}Y_{W}(Y(\sigma_{r}(G_{k,m}),x_{0})G_{l,n},x_{2})
e^{x_{0}(x_{2}\frac{\partial}{\partial x_{2}})}
		\delta\Big(\frac{\chi_{q}(\sigma_{r})x_{2}}{x_{1}}\Big)
		\nonumber\\
		&&{}
		=\sum_{r\in\mathbb{Z}}\sum_{s\geq 0}
		Y_{W}(\sigma_{r}(G_{k,m})_{s}G_{l,n},x_{2})
		\frac{1}{s!}\Big(x_{2}\frac{\partial}{\partial x_{2}}\Big)^{s}\delta\Big(\frac{q^{r}x_{2}}{x_{1}}\Big)
		\nonumber\\
		&&{}
		=\sum_{r\in\mathbb{Z}}Y_{W}([G_{k,m+r},G_{l,n}],x_{2})
\delta\Big(\frac{q^{r}x_{2}}{x_{1}}\Big)
+\sum_{r\in\mathbb{Z}}\psi(G_{k,m+r},G_{l,n})
\delta\Big(\frac{q^{r}x_{2}}{x_{1}}\Big)\ell_{1}{\bf 1}_{W}
\nonumber\\
		&&{}
		\;\;\;\;+\sum_{r\in\mathbb{Z}}\left\langle G_{k,m+r},G_{l,n}\right\rangle
		x_{2}\frac{\partial}{\partial x_{2}}
\delta\Big(\frac{q^{r}x_{2}}{x_{1}}\Big)\ell_{2}{\bf 1}_{W}
		\nonumber\\
		&&{}
		=
		Y_{W}(G_{k+l,n+k},x_{2})
		\delta\Big(\frac{q^{-m+n+k+l}x_{2}}{x_{1}}\Big)
-Y_{W}(G_{k+l,n-k},x_{2})
		\delta\Big(\frac{q^{-m+n-k-l}x_{2}}{x_{1}}\Big)
		\nonumber\\
		&&{}
		\;\;\;+k\delta_{k,-l}\delta\Big(\frac{q^{n-m}x_{2}}{x_{1}}\Big)\ell_{1}{\bf 1}_{W}
		+\delta_{k,-l}x_{2}\frac{\partial}{\partial x_{2}}\delta\Big(\frac{q^{n-m}x_{2}}{x_{1}}\Big)\ell_{2}{\bf 1}_{W}.
	\end{eqnarray}
	With (\ref{eq:com2}), we see that $W$ is a $\widetilde{V}_{q}$-module of level $\ell_{12}$ with
	$\widetilde{E}_{k,m}(x)=Y_{W}(G_{k,m},x)$, for $(k,m)\in\mathbb{Z}^{2}.$
	And $Y_{W}(G_{k,m},x)\in\mathcal{E}(W)$ for $(k,m)\in\mathbb{Z}^{2}.$
	Therefore, $W$ is a restricted $\widetilde{V}_{q}$-module of level $\ell_{12}$.
	\end{proof}

We then wish to study the generalized affine vertex algebra
$V_{\widetilde{\mathcal{A}}}(\ell_{12},0)$ and its simple quotient
$L_{\widetilde{\mathcal{A}}}(\ell_{12},0)$.
For this, there is the following result.
\begin{prop}
\label{dirsum}
The 2-cocycle $\psi$ of $\mathcal{A}$ in Example \ref{exmp2}
gives trivial central extension of $\mathcal{A}$.
Similarly, the 2-cocycle $\psi_{2}$ in Remark \ref{p2} gives trivial central extension
of the affine Lie algebra $\widehat{\mathcal{A}}$.
\end{prop}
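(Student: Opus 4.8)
The plan is to prove the first assertion by writing $\psi$ explicitly as a coboundary, and then to obtain the second assertion as a direct consequence of Proposition \ref{tcoyc}. Since $\{G_{\alpha,m}\mid \alpha,m\in\Z\}$ is a basis of $\mathcal{A}$, any assignment of scalars to these vectors extends uniquely to a linear map $\mu\colon\mathcal{A}\to\C$, so it suffices to find values of $\mu$ for which $\psi(x,y)=\mu([x,y])$ holds on every pair of basis vectors; bilinearity then upgrades the identity to all of $\mathcal{A}\times\mathcal{A}$, and by the definition of triviality this means $\psi$ gives a trivial (split) central extension.

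First I would exploit the $\Z$-grading of the bracket by the first index,
$$[G_{\alpha,m}, G_{\beta,n}]=\delta_{\alpha+\beta,m-n}G_{\alpha+\beta,\alpha+n}-\delta_{\alpha+\beta,n-m}G_{\alpha+\beta,n-\alpha}.$$
Both terms on the right lie in the degree-$(\alpha+\beta)$ subspace $\mathrm{span}\{G_{\alpha+\beta,p}\mid p\in\Z\}$, whereas $\psi$ is supported only where $\alpha+\beta=0$. This dictates setting $\mu(G_{\gamma,p})=0$ for $\gamma\neq 0$, which makes both sides of $\psi=\mu([\cdot,\cdot])$ vanish whenever $\alpha+\beta\neq 0$. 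It then remains to pin down $\mu$ on the diagonal vectors $G_{0,p}$. When $\alpha+\beta=0$ the bracket collapses to $[G_{\alpha,m},G_{-\alpha,n}]=\delta_{m,n}\big(G_{0,\alpha+n}-G_{0,n-\alpha}\big)$, so writing $f(p):=\mu(G_{0,p})$ the desired identity becomes the functional equation $f(\alpha+n)-f(n-\alpha)=\alpha$ for all $\alpha,n\in\Z$. The choice $f(p)=p/2$ solves it, since $(\alpha+n)/2-(n-\alpha)/2=\alpha$. I would therefore define $\mu(G_{\alpha,m})=\tfrac{m}{2}\delta_{\alpha,0}$ and verify, by the two-case check just described, that $\psi(x,y)=\mu([x,y])$ on all basis pairs; this exhibits $\psi$ as a coboundary, so $\mathcal{A}^{*}$ is a trivial central extension.

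For the second assertion I would invoke Proposition \ref{tcoyc} directly. By Remark \ref{p2}, $\psi_{2}$ is the extension of $\psi$ determined by $\psi_{2}(a\otimes t^{i},b\otimes t^{j})=\psi(a,b)\delta_{i+j+1,0}$ and $\psi_{2}(\widehat{\mathcal{A}},K_{2})=0$, which is precisely the situation treated inside the proof of Proposition \ref{tcoyc}: whenever $\psi$ is trivial via some $\mu$, the cocycle $\psi_{2}$ is trivial via $\mu_{2}(a\otimes t^{i}+\lambda K_{2})=\mu(a)\delta_{i+1,0}$. Feeding in the $\mu$ constructed above immediately gives that $\psi_{2}$ is a coboundary, hence a trivial central extension of $\widehat{\mathcal{A}}$.

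The only step requiring genuine insight is recognizing that the diagonal constraint reduces to $f(\alpha+n)-f(n-\alpha)=\alpha$ and guessing its solution $f(p)=p/2$; the crucial feature is the factor $\tfrac12$, which may produce half-integer values but is harmless since $\mu$ is $\C$-valued. Everything else is routine bookkeeping with Kronecker deltas, and the second assertion follows with no additional work from Proposition \ref{tcoyc}.
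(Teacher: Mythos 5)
Your proof is correct and takes essentially the same route as the paper: the paper also defines $\mu(G_{\alpha,m})=\tfrac{1}{2}\delta_{\alpha,0}m$ and $\mu_{2}(G_{\alpha,m}\otimes t^{i})=\tfrac{1}{2}\delta_{\alpha,0}\delta_{i+1,0}m$ (the latter being exactly the formula from the proof of Proposition \ref{tcoyc} that you invoke) and checks $\psi=\mu([\cdot,\cdot])$ on basis pairs. The only difference is that you derive $\mu$ from the grading and the functional equation $f(\alpha+n)-f(n-\alpha)=\alpha$, whereas the paper simply states it.
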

\begin{proof}
Let $\mu:\mathcal{A}\longrightarrow\C$ be a linear map defined by
$$\mu(G_{\alpha,m})=\frac{1}{2}\delta_{\alpha,0}m,$$
for $\alpha,m\in\Z$.
Then
$\psi(G_{\alpha,m},G_{\beta,n})=\mu([G_{\alpha,m},G_{\beta,n}])$
for any $\alpha,\beta,m,n\in\Z$.

 Let
$\mu_{2}:\widehat{\mathcal{A}}\longrightarrow\C$ be a linear map defined by
$$\mu_{2}(G_{\alpha,m}\otimes t^{i})=\frac{1}{2}\delta_{\alpha,0}\delta_{i+1,0}m,\;\;
\mbox{and}\; \mu_{2}(K_{2})=0,$$
 for $\alpha,m,i\in\Z$.
Then
$\psi_{2}(G_{\alpha,m}\otimes t^{i},G_{\beta,n}\otimes t^{j})
=\mu_{2}([G_{\alpha,m}\otimes t^{i},G_{\beta,n}\otimes t^{j}])$,
for any $\alpha,\beta,m,n\in\Z$.

\end{proof}

Therefore, by Proposition \ref{iso}, we have
$L_{\widetilde{\mathcal{A}}}(\ell_{12},0)\cong L_{\widehat{\mathcal{A}}}(\ell_{2},0)$
for any $\ell_{1},\ell_{2}\in\C$.
 Vertex algebra $L_{\widehat{\mathcal{A}}}(\ell_{2},0)$
and its modules has been studied in Section 4 of \cite{LTW20}.

\section{Category $\mathcal{O}$ of $\widetilde{V}_{q}$-modules}
	\def\theequation{5.\arabic{equation}}
\label{sec:5}
	\setcounter{equation}{0}

In this section, we introduce and study a category $\mathcal{O}$
of $\widetilde{V}_{q}$-modules.
We prove that objects in the category $\mathcal{O}$
are restricted $\widetilde{V}_{q}$-modules.
We also classify simple modules in the category $\mathcal{O}$.

Let
$$b=
\bigoplus_{l\geq 0,k\in\mathbb{Z}}\mathbb{C}E_{k,l}
\oplus\mathbb{C} c_{1}\oplus\mathbb{C} c_{2}.$$
Let $\ell_{1},\ell_{2}\in\mathbb{C}$.
Given a $b$-module
$V$ with $c_{1}$, $c_{2}$ act
as scalars $\ell_{1},\ell_{2}$ respectively.
Consider the induced module
$$\mbox{Ind}_{\ell_{12}}(V)=U(\widetilde{V_{q}})\otimes_{U(b)}V.$$

We first show that under certain conditions the induced module
$\mbox{Ind}_{\ell_{12}}(V)$ is a simple $\widetilde{V_{q}}$-module.
\begin{thm}
	\label{thm:simple}
	Let $V$ be a simple $b$-module and assume that there exists $t\in\mathbb{Z}_{+}$ such that
	\begin{itemize}
		\item [(1)] $E_{k,t}$ acts injectively on $V$ for all $k\in\Z$.
		\item [(2)] $E_{k,l}V=0$ for all $k\in\Z$, $l> t$.
	\end{itemize}
Then for any $\ell_{1},\ell_{2}\in\mathbb{C}$, the induced module
{\em $\mbox{Ind}_{\ell_{12}}(V)$} is a simple $\widetilde{V}_{q}$-module.
\end{thm}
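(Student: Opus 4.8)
The plan is to realize $\mathrm{Ind}_{\ell_{12}}(V)$ concretely via a triangular decomposition and then show that every nonzero submodule meets the canonical copy of $V$. Write $\mathfrak{n}_{-}=\bigoplus_{l<0,\,k\in\Z}\C E_{k,l}$. One checks directly from (\ref{eq:dif}) that $\mathfrak{n}_{-}$ is a subalgebra (the bracket of two weight-negative elements is weight-negative, and the central term, which would need $l=-s$, cannot occur for $l,s<0$), that $b$ is a subalgebra, and that $\widetilde{V}_{q}=\mathfrak{n}_{-}\oplus b$. By the PBW theorem, $U(\widetilde{V}_{q})\cong U(\mathfrak{n}_{-})\otimes U(b)$ as vector spaces, so $\mathrm{Ind}_{\ell_{12}}(V)\cong U(\mathfrak{n}_{-})\otimes V$, and $1\otimes V$ is a $b$-submodule isomorphic to $V$. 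Since $V$ is simple and $U(\widetilde{V}_{q})(1\otimes V)=\mathrm{Ind}_{\ell_{12}}(V)$, it suffices to prove that every nonzero $\widetilde{V}_{q}$-submodule $M$ satisfies $M\cap(1\otimes V)\neq 0$: then $M\cap(1\otimes V)$ is a nonzero $b$-submodule of $1\otimes V\cong V$, hence all of it, whence $M=\mathrm{Ind}_{\ell_{12}}(V)$.

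Grade $U(\mathfrak{n}_{-})$ by \emph{degree}, where a monomial $E_{k_{1},l_{1}}\cdots E_{k_{p},l_{p}}$ ($l_{i}<0$) has degree $-\sum_{i}l_{i}\ge 0$; for $0\neq w\in U(\mathfrak{n}_{-})\otimes V$ let $d(w)$ be the largest degree appearing in its PBW expansion. The decisive computational fact, which uses hypothesis (2), is that a raising operator $E_{a,b}$ with $b>t$ \emph{strictly lowers the degree}: acting on $u\otimes v$ the ``diagonal'' contribution $u\otimes E_{a,b}v$ vanishes because $E_{a,b}v=0$, while every term coming from $[E_{a,b},u]$ has strictly smaller degree (bracketing $E_{a,b}$ with a factor $E_{k_{i},l_{i}}$ replaces it by the less negative, possibly nonnegative, weight $b+l_{i}$, and nonnegative-weight factors are moved to the right to act on $V$). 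The strategy is then a minimal-degree argument: choose a nonzero $w\in M$ with $d(w)=d$ minimal. If $d\ge 1$, let $L<0$ be the most negative weight occurring in a degree-$d$ monomial of $w$ and set $b=t-L>t$. Then $E_{a,b}w\in M$ has degree $<d$ for every $a\in\Z$, so minimality forces $E_{a,b}w=0$ for all $a$.

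The heart of the proof is to show that this forces the degree-$d$ part of $w$ to vanish, contradicting $d\ge 1$; here both remaining hypotheses enter. With $b=t-L$, bracketing $E_{a,b}$ with a weight-$L$ (deepest) factor $E_{k,L}$ produces $E_{a+k,\,b+L}=E_{a+k,t}$, precisely the level-$t$ operator, which by hypothesis (1) acts \emph{injectively} on $V$, whereas bracketing with any shallower factor produces a factor of weight $>t$, which annihilates $V$. Consequently, in a suitable total order on PBW monomials (refining degree by sensitivity to the deepest factors), the leading term of $E_{a,b}w$ should be, up to sign, $c(a)\,(u^{\star}\setminus\text{deepest factor})\otimes E_{a+k,t}v^{\star}$, where $u^{\star}$ is the leading monomial, $v^{\star}\neq 0$ its coefficient, and $c(a)=q^{kb-La}-q^{La-kb}$. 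I expect the \textbf{main obstacle} to be ruling out cancellation: one must show that this leading term is genuinely nonzero for a suitable $a$ (equivalently, that its vanishing for all $a$ is impossible even when distinct monomials of $w$ collide after peeling). The tools are that $q$ is not a root of unity, so $c(a)\neq 0$ whenever the exponent is nonzero and the distinct powers of $q^{a}$ can be separated as $a$ ranges over $\Z$, together with the injectivity of every $E_{a+k,t}$ on $V$, which lets one strip off the $V$-coefficients term by term. Carrying out this separation carefully yields $v^{\star}=0$, the desired contradiction, so $d=0$, $w\in 1\otimes V$, and $M\cap(1\otimes V)\neq 0$.
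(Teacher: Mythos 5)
Your proposal follows essentially the same route as the paper's proof: both rest on the PBW identification $\mathrm{Ind}_{\ell_{12}}(V)\cong U(\mathfrak{n}_{-})\otimes V$ with $\mathfrak{n}_{-}=\bigoplus_{l<0,k\in\Z}\C E_{k,l}$, grade by the negated sum of second indices, use hypothesis (2) so that a raising operator of weight $>t$ kills the diagonal term and strictly lowers the degree, and use hypothesis (1) to get something nonzero at the bottom. The only structural difference is that the paper collapses a top-degree monomial all the way to $V$ in one step, applying $E_{k',t+j}$ with $j$ equal to the full degree (its computation gives $E_{k',t+j}E_{k_n,-j_n}^{i_n}\cdots E_{k_1,-j_1}^{i_1}v\in\C E_{k'+\sum i_lk_l,\,t}v$ when $j=\sum i_lj_l$, and $0$ when $j>\sum i_lj_l$), whereas you peel off one deepest factor at a time with $E_{a,t-L}$; both variants work. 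Two caveats. First, your description of the leading term of $E_{a,t-L}w$ is too optimistic: at degree $d+L$ you also receive contributions from iterated commutators that absorb several factors whose weights sum to $L$ (e.g. $-1$ and $-3$ when $L=-4$), not only a single weight-$L$ factor, so the ``suitable total order'' you invoke needs real care; the paper's one-step variant avoids this particular complication because only the term absorbing \emph{all} factors survives. Second, the issue you flag as the main obstacle --- that the surviving sum $\sum_m c_m(a)\,(\cdots)\otimes E_{a+K_m,t}v_m$ might vanish for every $a$, and that the scalars $c_m(a)$, being products of factors $q^{e}-q^{-e}$, might individually vanish --- is genuine, but it is precisely where the paper's own proof also stops: it simply asserts that ``with the assumptions (1) and (2) we can always arrive at a nonzero element in $V$.'' So your write-up is no less complete than the published argument; to make either airtight one must exploit the freedom in the first index $a$ (resp.\ $k'$) together with $q$ not being a root of unity to separate the finitely many exponential functions of $a$ that occur, and then apply the injectivity from (1).
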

\begin{proof}
Let $v$ be a nonzero element in $V$.
Let $k^{'},k_{1}\in\Z$, $j_{1}\in\Z_{+}$.
It is straightforward to show (by induction) that for any
 $i_{1}\in\N$  we have
$$E_{k^{'},t+j}E^{i_{1}}_{k_{1},-j_{1}}v\in
\C E_{k^{'}+i_{1}k_{1},t+j-i_{1}j_{1}}v\subset V
\;\;\mbox{for all}\;j\geq i_{1}j_{1}.$$
Let $n\geq 1$, $k_{1}, k_{2},\ldots, k_{n}\in\Z$, $j_{1},j_{2},\ldots,j_{n}\in\Z_{+}$.
Similarly, one can get that for any
$i_{1},i_{2},\ldots,i_{n}\in\N$ we have
$$E_{k^{'},t+j}E_{k_{n},-j_{n}}^{i_{n}}\cdots
E_{k_{2},-j_{2}}^{i_{2}}E_{k_{1},-j_{1}}^{i_{1}}v
\in \C E_{k^{'}+\sum_{l=1}^{n}i_{l}k_{l},t+j-(\sum_{l=1}^{n}i_{l}j_{l})}v\subset V$$
for all $j\geq \sum_{l=1}^{n}i_{l}j_{l}.$
In particular,
$$E_{k^{'},t+j}E_{k_{n},-j_{n}}^{i_{n}}\cdots
E_{k_{2},-j_{2}}^{i_{2}}E_{k_{1},-j_{1}}^{i_{1}}v=0\;\;\mbox{for}\;
j>\sum_{l=1}^{n}i_{l}j_{l}$$
and
$$E_{k^{'},t+\sum_{l=1}^{n}i_{l}j_{l}}E_{k_{1},-j_{n}}^{i_{n}}\cdots
E_{k_{2},-j_{2}}^{i_{2}}E_{k_{1},-j_{1}}^{i_{1}}v
\in \C E_{k^{'}+\sum_{l=1}^{n}i_{l}k_{l},t}v\subset V.$$

By PBW theorem, every nonzero element $v$ of $\mbox{Ind}_{\ell_{12}}(V)$ can be uniquely written in the form of a finite sum
$w=\sum\limits_{m\geq 0}E_{k_{m},j_{m}}\cdots E_{k_{2},j_{2}} E_{k_{1},j_{1}}v_{m},$
 where
$v_{m}\in V$.
With above and the assumptions (1) and (2), for any $w\in\mbox{Ind}_{\ell_{12}}(V)$,
we can always arrive at a nonzero element in $V$.
The simplicity of $V$ then tells us that $\mbox{Ind}_{\ell_{12}}(V)$ is a simple $\widetilde{V}_{q}$-module.
\end{proof}

Recall that a module $V$ over a Lie algebra $L$
is {\em locally finite}
		if for any $v\in V$, $\mbox{dim}(\sum\limits_{n\in\Z_{+}} L^{n}v)<+\infty$.

For any $t\in\Z_{+}$,
denote by
$$\widetilde{V_{q}}^{(t)}=\sum_{k\in\Z,l\geq t}\C E_{k,l}.$$
It is a subalgebra of $\widetilde{V_{q}}$.
Let $\mathcal{O}$ be a category of $\widetilde{V_{q}}$-modules
which are locally finite with respect to some $\widetilde{V_{q}}^{(t)}$ for $t\in\Z_{+}$.

The following Lemma tells us that all
the modules in the category $\mathcal{O}$
are restricted modules.
\begin{lem}
\label{zero}
Let $V$ be a $\widetilde{V_{q}}$-module in the category $\mathcal{O}$.
Then
for any nonzero vector $v\in V$, there exists $t\in\Z_{+}$ such that
$\widetilde{V_{q}}^{(t)}v=0$.
\end{lem}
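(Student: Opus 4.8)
The plan is to fix a nonzero $v$, pass to a finite-dimensional cyclic module over $\mathfrak{n}:=\widetilde{V_{q}}^{(t)}$, and then show that the high-degree part of $\mathfrak{n}$ annihilates $v$ by combining the grading of $\mathfrak{n}$ with the genericity of $q$. First I would record the reduction. By definition of $\mathcal{O}$ the module $V$ is locally finite with respect to $\mathfrak{n}=\widetilde{V_{q}}^{(t)}$ for some $t$, so $M:=U(\mathfrak{n})v$ is finite-dimensional; since $\mathfrak{n}M\subseteq M$, $M$ is a finite-dimensional $\mathfrak{n}$-submodule containing $v$, and it suffices to produce $N$ with $\rho(\mathfrak{n}_{\geq N})=0$ on $M$, where $\rho:\mathfrak{n}\to\End(M)$ is the representation. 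Here I use the grading $\mathfrak{n}=\bigoplus_{l\geq t}\mathfrak{n}_{l}$ with $\mathfrak{n}_{l}=\sum_{k\in\Z}\C E_{k,l}$: because $l+s\geq 2t>0$ the central terms in (\ref{eq:dif}) drop out, so $[\mathfrak{n}_{l},\mathfrak{n}_{s}]\subseteq\mathfrak{n}_{l+s}$ and $\mathfrak{n}$ is positively graded.

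The next step is to identify the lower central series as $\mathfrak{n}^{(j)}=\mathfrak{n}_{\geq jt}$. The inclusion $\subseteq$ is immediate; for $\supseteq$ one argues by induction, checking that any $E_{K,L}$ with $L\geq jt$ is an iterated bracket: writing $L=t+s$ with $s\geq (j-1)t$ and $K=k+r$, the structure constant $q^{rt-s k}-q^{sk-rt}$ equals $q^{rL-sK}-q^{-(rL-sK)}$, which is nonzero once an integer $r$ is chosen with $rL\neq sK$ (possible since $q$ is not a root of unity and $L>0$), so one can solve for $E_{K,L}$. Consequently $\rho(\mathfrak{n}^{(j)})=\bar{\mathfrak{n}}^{(j)}=\rho(\mathfrak{n}_{\geq jt})$, where $\bar{\mathfrak{n}}=\rho(\mathfrak{n})$. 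These form a decreasing chain of subspaces of the finite-dimensional $\bar{\mathfrak{n}}$, hence stabilize to an ideal $I$ with $[\bar{\mathfrak{n}},I]=I$. The lemma is then equivalent to $I=0$, i.e.\ to the assertion that $\bar{\mathfrak{n}}$ is a \emph{nilpotent} Lie algebra, since $I=0$ forces $\rho(\mathfrak{n}_{\geq N})=0$ for some $N$.

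The engine for $I=0$ is the generic-$q$ relation $[E_{0,t},E_{k,l}]=(q^{kt}-q^{-kt})E_{k,l+t}$, so that $D:=\mathrm{ad}_{\rho(E_{0,t})}$ on $\End(M)$ satisfies $D(\rho(E_{k,l}))=(q^{kt}-q^{-kt})\rho(E_{k,l+t})$ with $q^{kt}-q^{-kt}\neq 0$ for every $k\neq 0$. Thus $D$ strictly raises degree and, up to nonzero scalars, is invertible on the $k\neq0$ generators while annihilating the commuting family $\rho(E_{0,l})$; iterating, $\mathrm{im}(D^{m})\subseteq\rho(\mathfrak{n}_{\geq(m+1)t})\subseteq I$ for $m$ large. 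My intended route is to combine this degree shift with the generalized eigenspace decomposition of $\rho(E_{0,t})$ on $M$: the eigenvalue-preserving blocks of the $\rho(E_{k,l})$ are killed after finitely many applications of $D$, while the $E_{0,l}$ with $l\geq 2t$ lie in $[\mathfrak{n},\mathfrak{n}]=\mathfrak{n}_{\geq 2t}$ and are treated as commutators; together these should force the stabilized term $I$ to vanish.

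I expect this last step to be the genuine obstacle. It is \textbf{not} true for an arbitrary positively graded Lie algebra that every finite-dimensional representation has nilpotent image (a free Lie algebra is positively graded yet surjects onto $\mathfrak{sl}_2$), so the naive ``graded $\Rightarrow$ image nilpotent'' reasoning must be avoided, and the genericity of $q$ has to enter decisively precisely to exclude a surviving non-nilpotent (e.g.\ solvable but non-nilpotent) piece of $\bar{\mathfrak{n}}$ — exactly the content of the implication $[\bar{\mathfrak{n}},I]=I\Rightarrow I=0$. The delicate point is to show that no off-diagonal block of $\rho(E_{k,l})$ can persist at all degrees $l$, which is where the explicit shape of the structure constants $q^{rl-sk}-q^{sk-rl}$ and the fact that $q$ is non-torsion are used; if this direct eigenspace analysis proves awkward, an alternative is to bring in the lowering operators $E_{0,-t}$ from the ambient $\widetilde{V}_{q}$-module (available since $V$ is a full $\widetilde{V}_{q}$-module and every vector is $\mathfrak{n}$-locally finite) to relate high-degree and low-degree components and close the argument.
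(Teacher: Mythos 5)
Your reduction is fine (and matches the paper's opening move): $M=U(\widetilde{V_{q}}^{(t)})v$ is finite-dimensional, the subalgebra $\widetilde{V_{q}}^{(t)}$ is positively graded with $[\mathfrak{n}_{l},\mathfrak{n}_{s}]\subseteq\mathfrak{n}_{l+s}$, and your computation that $\mathfrak{n}_{\geq (j+1)t}=[\mathfrak{n},\mathfrak{n}_{\geq jt}]$ (choosing $r$ with $rL\neq sK$, legitimate since $q$ is generic) is correct, so the images $\rho(\mathfrak{n}_{\geq jt})$ stabilize to an ideal $I$ of $\bar{\mathfrak{n}}$ with $[\bar{\mathfrak{n}},I]=I$, and the lemma is indeed equivalent to $I=0$. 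But that equivalence is where your proof stops being a proof. As you yourself note, $[\bar{\mathfrak{n}},I]=I$ does not force $I=0$ for a general finite-dimensional image of a positively graded Lie algebra, and the engine you propose does not close the gap: from $\rho(E_{k,l+mt})=c_{m}D^{m}(\rho(E_{k,l}))$ with $D=\mathrm{ad}_{\rho(E_{0,t})}$ and $c_{m}\neq 0$, concluding that these vanish for large $m$ requires knowing that $\rho(E_{k,l})$ lies in the generalized $0$-eigenspace of $D$ on $\End(M)$ — which is essentially the statement you are trying to prove, not something you have established. The "generalized eigenspace decomposition" paragraph is a plan, not an argument, and the final paragraph concedes as much. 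So the proposal has a genuine gap at its central step.

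For comparison, the paper closes this step by working with the kernel rather than the image: the kernel $I$ of $\rho$ on $M$ is an ideal of $\widetilde{V_{q}}^{(r)}$ of \emph{finite codimension}, hence contains a nonzero finite linear combination of the $E_{k,l}$. Taking such an element with a minimal number of homogeneous components and bracketing it repeatedly with suitable generators $E_{k',s}$, the explicit structure constants $q^{rl-sk}-q^{sk-rl}$ (nonvanishing for generic $q$ except on one hyperplane of exponents) let one strictly shorten the combination, so minimality forces a single monomial $E_{k,l}\in I$. Two further explicit bracket computations (the paper's Cases 1 and 2) then propagate this to $\widetilde{V_{q}}^{(2r+l)}\subseteq I$. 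If you want to keep your image-side framework, you would need to supply an argument of comparable strength — e.g.\ first producing one vanishing $\rho(E_{k,l})$ and then using the ideal property — rather than hoping the $\mathrm{ad}_{\rho(E_{0,t})}$ spectral analysis resolves itself.
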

\begin{proof}
Let $0\neq v\in V$.
By assumption, there exists $r\in\Z_{+}$ such that
$\widetilde{V_{q}}^{(r)}$ acts locally finite on $V$.
Hence $W=U(\widetilde{V_{q}}^{(r)})v$ is a finite dimensional
$\widetilde{V_{q}}^{(r)}$-module.
Let $I\subset \widetilde{V_{q}}^{(r)}$ be the kernel
of the representation map, i.e.
$I=\{x\in\widetilde{V_{q}}^{(r)}\ | \ x.v=0\}$.
Then $I$ is an ideal of $\widetilde{V_{q}}^{(r)}$ of finite codimension.
We claim that
there exists $k,l\in\Z$,
$l\geq r$, such that $E_{k,l}\in I$.
If not, then there exists a minimal $m\in\N$ such that
$I$ contains
$$E_{\underline{n},\underline{m}}:=(a^{(s)}_{1}E_{k_{11},s}
%+a_{12}E_{k_{12},s}
+\cdots+a^{(s)}_{n_{1}}E_{k_{1n_{1}},s})
+\cdots+(a^{(s+m)}_{1}E_{k_{m1},s+m}
%+a_{m2}E_{k_{m2},s+m}
+\cdots+a^{(s+m)}_{n_{m}}E_{k_{mn_{m}},s+m})$$
for some $s\geq r$, $k_{ij}\in\Z$, $i=1,\ldots,m$, $j=1,\ldots,n_{1},\ldots,n_{m}$,
and complex numbers $a^{(l)}_{j}$ satisfying $a^{(s)}_{1}\neq 0$,
$a^{(s)}_{n_{1}}\neq 0$ if $m=0$ (and then $n_{1}$ is taken to be the minimal one) or $a^{(s+m)}_{n_{m}}\neq 0$ if $m\in\Z_{+}$.
Then
$I$ contains
$[E_{k_{11},s},E_{\underline{n},\underline{m}}]$ which is an element
of the form
$(b^{(s)}_{2}E_{k_{11}+k_{12},2s}
%+a_{12}E_{k_{12},s}
+\cdots+b^{(s)}_{n_{1}}E_{k_{11}+k_{1n_{1}},2s})
+\cdots+(b^{(s+m)}_{1}E_{k_{11}+k_{m1},2s+m}
%+a_{m2}E_{k_{m2},s+m}
+\cdots+b^{(s+m)}_{n_{m}}E_{k_{11}+k_{mn_{m}},2s+m}).$
If $m=0$, then it contradicts to our choice of $n_{1}$.
If $m\in\Z_{+}$, then
$I$ contains
$$\displaystyle[E_{\sum_{i=1}^{n_{1}-1}2^{n_{1}-1-i}k_{1i}+k_{1n_{1}},2^{n_{1}-1}s},\cdots,[E_{k_{11}+k_{12},2s},[E_{k_{11},s},
E_{\underline{n},\underline{m}}]]\cdots],$$
which is of the form
$(c^{(s+1)}_{1}E_{r_{21},(2^{n_{1}}-1)s+s+1}
+\cdots+c^{(s+1)}_{n_{2}}E_{r_{2n_{2}},(2^{n_{1}}-1)s+s+1})
+\cdots+(c^{(s+m)}_{1}E_{r_{m1},(2^{n_{1}}-1)s+s+m}
+\cdots+c^{(s+m)}_{n_{m}}E_{r_{mn_{m}},(2^{n_{1}}-1)s+s+m}),$
this contradicts to our choice of $m$.

Let $E_{k,l}\in I$.
Then $l\geq r$, $k\in\Z$, $E_{k,l}v=0$.
We claim then $\widetilde{V_{q}}^{(2r+l)}\subset I$.
For any $E_{m,n}\in\widetilde{V_{q}}^{(2r+l)}$.
Then $n\geq 2r+l$.
We have
$$[E_{m-k,n-l-r}, E_{k,l}]=(q^{k(n-r)-lm}-q^{lm-k(n-r)})E_{m,n-r}.$$
{\bf Case 1:} If $k(n-r)= lm$.
Take a nonzero integer $s$ with $ns\neq mr$ (for latter use).
Then
$$[E_{m-k-s,n-l-r}, E_{k,l}]=(q^{sl}-q^{-sl})E_{m-s,n-r}$$
gives that $E_{m-s,n-r}\in I$.
And then
$$[E_{m-s,n-r}, E_{s,r}]=(q^{ns-mr}-q^{mr-ns})E_{m,n}$$
gives that $E_{m,n}\in I$.\\
{\bf Case 2:} If $k(n-r)\neq lm$, then
$E_{m,n-r}\in I$.
And then if $m\neq 0$,
$$[E_{m,n-r}, E_{0,r}]=(q^{-mr}-q^{mr})E_{m,n}$$
gives that $E_{m,n}\in I$.
For $m=0$. If $k\neq 0$, then
$$[E_{-k,n-l}, E_{k,l}]=(q^{kn}-q^{-kn})E_{0,n}$$
gives that $E_{0,n}\in I$.
For $m=0$ and $k=0$, we are in Case 1.
Hence the result holds.
\end{proof}

%We then get that all the simple modules
%in the category $\mathcal{O}$ are restricted modules.
%\begin{prop}
%Suppose that $V$ is a simple $\widetilde{V_{q}}$-module
%in the category $\mathcal{O}$.
%Then $V$ is a restricted $\widetilde{V_{q}}$-module.
%\end{prop}
%\begin{proof}
%By Lemma \ref{zero},
%for any nonzero vector $v\in V$, there exists $t\in\Z_{+}$ such that
%$\widetilde{V_{q}}^{(t)}v=0$.
%Note that
%$V=U(\widetilde{V_{q}})v$.
%Then for any nonzero $w\in V$, by PBW theorem, $w$ is a finite sum of the elements
%of the form
%$w_{n}:=E_{k_{n},l_{n}}\cdots E_{k_{2},l_{2}}E_{k_{1},l_{1}}v$, $n\geq 0$.
%Therefore, the proof of Theorem \ref{thm:simple} tells us that
%for any $k\in\Z$, $E_{k,l}w=0$ for $l$ sufficiently large.
%Hence $V$ is a restricted $\widetilde{V_{q}}$-module.
%\end{proof}

As we need, we introduce the following notion.
If a $\widetilde{V_{q}}$-module $V$ is generated by a vector $0\neq v\in V$ with
$E_{k,l}v=0$ for all $k\in\Z$ and $l>0$, then $V$ is called a
 {\em highest weight module}. Harish-Chandra modules in \cite{LT06}
 are highest weight modules.

Finally, we give a classification of all simple $\widetilde{V_{q}}$-module
in the category $\mathcal{O}$.
\begin{thm}
Let $S$ be a simple $\widetilde{V_{q}}$-module
in the category $\mathcal{O}$.
Then $S$ is a highest weight module, or there exists $\ell_{1},\ell_{2}\in\C$, $t\in\Z_{+}$ and a simple
$b$-module $V$ such that both conditions (1) and (2) of Theorem \ref{thm:simple}
are satisfied and {\em $S\cong\mbox{Ind}_{\ell_{12}}V$}.
\end{thm}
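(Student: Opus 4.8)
The plan is to stratify $S$ by how far up the second index an element is annihilated, and then split into two cases. For $s\geq 0$ set
$$S_{s}=\{\,v\in S\mid E_{k,l}v=0\ \mbox{for all}\ k\in\Z,\ l>s\,\}.$$
A one-line bracket check shows each $S_{s}$ is a $b$-submodule: for $v\in S_{s}$, $q\geq 0$ and $l>s$ we have $E_{k,l}(E_{p,q}v)=[E_{k,l},E_{p,q}]v$, whose noncentral part sits in second index $l+q>s$ (so it kills $v$), while the central term needs $l=-q\leq 0$, impossible. Thus $S_{0}\subseteq S_{1}\subseteq\cdots$, and by Lemma \ref{zero} we have $S=\bigcup_{s\geq 0}S_{s}$. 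If $S_{0}\neq 0$, any nonzero $v\in S_{0}$ satisfies $E_{k,l}v=0$ for $l>0$, and by simplicity $S=U(\widetilde{V_{q}})v$, so $S$ is a highest weight module and we are done. Hence assume $S_{0}=0$ and let $t\in\Z_{+}$ be minimal with $S_{t}\neq 0$.

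In this case I would realize $S$ as an induced module with bottom layer $V:=S_{t}$. Decompose $\widetilde{V_{q}}=\mathfrak{n}^{-}\oplus b$, where $\mathfrak{n}^{-}=\bigoplus_{l<0,\,k\in\Z}\C E_{k,l}$ is a subalgebra (no central term occurs in strictly negative degree). Since $V\subseteq S$ is a $b$-submodule, induction gives a canonical $\widetilde{V_{q}}$-homomorphism
$$\pi:\mathrm{Ind}_{\ell_{12}}(V)=U(\widetilde{V_{q}})\otimes_{U(b)}V\longrightarrow S,\qquad u\otimes w\mapsto u\cdot w,$$
which is nonzero, hence surjective because $S$ is simple. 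Therefore it suffices to prove that $V$ is a \emph{simple} $b$-module satisfying conditions (1) and (2) of Theorem \ref{thm:simple}: then $\mathrm{Ind}_{\ell_{12}}(V)$ is simple, so $\pi$ has proper kernel, and surjectivity forces $\pi$ to be an isomorphism, giving $S\cong\mathrm{Ind}_{\ell_{12}}(V)$.

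Condition (2) is immediate from the definition of $V=S_{t}$. The rest hinges on the top operators $T_{k}:=E_{k,t}|_{V}$. From the defining relation one reads off, on $V$, that $[T_{k},T_{r}]=0$ (the bracket lives in second index $2t$, which annihilates $S_{t}$, and $t>0$ kills the would-be central term), that $\bigcap_{k}\ker T_{k}\subseteq S_{t-1}=0$, and that the degree-zero elements shift the family via $[E_{p,0},T_{k}]=(q^{-tp}-q^{tp})T_{p+k}$, with nonzero coefficient for $p\neq 0$ precisely because $q$ is not a root of unity. Note also that $\mathfrak{g}_{0}=\sum_{p}\C E_{p,0}\oplus\C c_{1}\oplus\C c_{2}$ is a Heisenberg algebra, since $[E_{p,0},E_{-p,0}]=pc_{1}$.

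The main obstacle is upgrading ``$\bigcap_{k}\ker T_{k}=0$'' to injectivity of each individual $T_{k}$ (condition (1)), together with $b$-simplicity of $V$. For injectivity I would fix $k_{0}$, put $N=\ker T_{k_{0}}$, and first verify that $N$ is stable under all of $b$ \emph{except} the degree-zero part: for $q\geq 1$ the bracket $[E_{k_{0},t},E_{p,q}]$ lands in second index $t+q>t$ and so annihilates $S_{t}$, while commutativity of the $T_{k}$ preserves $N$. The only genuine interaction is through $\mathfrak{g}_{0}$, governed by the displayed shift relation. The plan is to combine this relation, the commutativity of the $T_{k}$, the Heisenberg structure of $\mathfrak{g}_{0}$, and the simplicity of the ambient $S$ (by generating an $\mathfrak{n}^{-}$-submodule from $N$ and pulling it back down to level $t$) to show that a nonzero $N$ would either violate $\bigcap_{k}\ker T_{k}=0$ or contradict the minimality of $t$; the same circle of ideas, pushing any nonzero $b$-submodule of $V$ back up to all of $V$ via $\mathfrak{g}_{0}$ and the $T_{k}$, should give $b$-simplicity. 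Controlling the degree-zero Heisenberg action on the commuting family $\{T_{k}\}$ is the crux, and it is exactly where the hypothesis that $q$ is not a root of unity must be used decisively.
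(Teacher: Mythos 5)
Your overall skeleton is the same as the paper's: stratify $S$ by the subspaces $S_{s}=\{v\mid E_{k,l}v=0,\ l>s\}$ (the paper calls them $N_{s}$), use Lemma \ref{zero} to see $S=\bigcup_{s}S_{s}$, split off the highest weight case when $S_{0}\neq 0$, take $t$ minimal with $V:=S_{t}\neq 0$, and study the surjection $\pi:\mathrm{Ind}_{\ell_{12}}(V)\to S$. Your verification that each $S_{s}$ is a $b$-submodule and your computations of $[T_{k},T_{r}]=0$ and $[E_{p,0},T_{k}]=(q^{-tp}-q^{tp})T_{p+k}$ on $V$ are correct. However, the proposal stops being a proof exactly at the point you yourself identify as ``the crux'': condition (1) of Theorem \ref{thm:simple} (injectivity of each individual $E_{k,t}$ on $V$, not merely $\bigcap_{k}\ker T_{k}=0$, which is all that minimality of $t$ directly yields) and the $b$-simplicity of $V$ are both left as a ``plan'' with no actual argument. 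Since your route to injectivity of $\pi$ goes through Theorem \ref{thm:simple} applied to $V$, which requires both of these facts as hypotheses, the argument is genuinely incomplete: nothing you have written rules out, say, a nonzero $v\in V$ with $T_{k_{0}}v=0$ for a single $k_{0}$, because the shift relation only gives $(q^{-tp}-q^{tp})T_{p+k_{0}}v=-T_{k_{0}}(E_{p,0}v)$, which need not vanish.

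Two remarks on how this compares with the paper. First, the paper's own treatment of condition (1) is a one-line assertion (``it follows from $t$ smallest''), so you have correctly located a delicate point; but flagging a difficulty is not the same as resolving it, and your submission must actually close it. Second, and more actionably, the paper orders the argument so that $b$-simplicity of $V$ is \emph{not} needed in advance: it proves $\ker\pi=0$ directly by the descent computation from the proof of Theorem \ref{thm:simple} (repeated application of $E_{k',t+j}$ to a PBW-normal-form element of $\mathrm{Ind}_{\ell_{12}}(V)$ lands in a nonzero element of $V$, which would lie in $V\cap\ker\pi=0$), a computation that uses only conditions (1) and (2), and then reads off the simplicity of $V$ from $S\cong\mathrm{Ind}_{\ell_{12}}(V)$ at the very end. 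Reorganizing your proof this way would remove one of your two unproved claims entirely; the remaining obligation, condition (1), still has to be argued rather than announced.
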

\begin{proof}
By Lemma \ref{zero}, for any nonzero $v\in S$, there exists
some $j\in\N$ such that $v$ is annihilated by all $E_{k,l}$,
$k\in\Z$, $l> j$.
Consider the following vector space
$$N_{j}=\{v\in S\ | \ E_{k,l}v=0\;\;\mbox{for all}\;k\in\Z,l> j\}.$$
We have $N_{j}\neq 0$ for some $j$ by the above argument.
If $N_{0}\neq 0$, then $S$ is an irreducible highest weight module.
Assume $j\geq 1$.
Thus we can find a smallest positive integer, say $t$, with $V:=N_{t}\neq 0$.
It is easy to check that $V$ is a $b$-module.
Note that $c_{1},c_{2}$ act as scalars on $V$, say $\ell_{1},\ell_{2}$ respectively.
It follows from $t$ is smallest that assumption (1) of Theorem \ref{thm:simple}
is satisfied.

Then there is a canonical surjective (since $S$ is simple) morphism
$$\pi:\mbox{Ind}_{\ell_{12}}V\longrightarrow S,\;\;1\otimes v\mapsto v,\;\;\forall v\in V.$$
It remains to show that $\pi$ is also injective.
Let $K=\mbox{Ker}(\pi)$.
If $K\neq 0$. Take $0\neq v\in K$. Then $v\in \mbox{Ind}_{\ell_{12}}V$.
 By the same analysis as the proof of Theorem \ref{thm:simple}, from $v$ we can arrive at $0\neq u\in V$.
Since $K$ is a $\widetilde{V_{q}}$-module, $u\in K$.
Hence $u\in V\cap K$.
But $V\cap K=0$, contradiction!
Hence $K=0$.
$S\cong\mbox{Ind}_{\ell_{12}}V$ as $\widetilde{V_{q}}$-modules.
Then $V$ is a simple $b$-module by the property of induced modules.
\end{proof}


\begin{thebibliography}{99}

 \bibitem[1]{BGK96}
 S. Berman, Y. Gao, Y. S. Krylyuk,
 Quantum tori and the structure of elliptic quasi-simple Lie algebras.
 {\em J. Funct. Anal.} (135) (1996), no. 2, 339-389.

\bibitem[2]{BG20}
M. Bershtein, R. Gonin,
 Twisted representations of algebra of $q$-difference operators,
 twisted $q$-$W$ algebras and conformal blocks.
 {\em SIGMA Symmetry Integrability Geom. Methods Appl.} (16) (2020), Paper No. 077, 55 pp.


\bibitem[3]{FZ92}
 I. B. Frenkel, Y. C. Zhu,
 Vertex operator algebras associated to representations of affine and Virasoro algebras.
 {\em Duke Math. J.} (66) (1992), no. 1, 123-168.

  \bibitem[4]{G-KK98}
  M. Golenishcheva-Kutuzova, V. Kac,
   $\Gamma$-conformal algebras.
    {\em J. Math. Phys.} (39) (1998), 2290-2305.

 \bibitem[5]{GLTW1}
  H. Y. Guo, H. S. Li, S. Tan and Q. Wang,
   $q$-Virasoro algebra and vertex algebras.
  {\em J. Pure Appl. Algebra} (219) (2015), no. 4, 1258-1277.

 \bibitem[6]{GLTW2}
 H. Y. Guo, H. S. Li, S. Tan and Q. Wang,
   $q$-Virasoro algebra and affine Kac-Moody Lie algebras.
   {\em J. Algebra} (534) (2019), 168-189.

\bibitem[7]{JL14}
 C. P. Jiang, H. S. Li,
  Associating quantum vertex algebras to Lie algebra $\mathfrak{gl}_{\infty}$.
  {\em J. Algebra} (399) (2014), 1086-1106.

 	
\bibitem[8]{KPS94}
 E. Kirkman, C. Procesi, L. Small,
 A $q$-analog for the Virasoro algebra.
 {\em Comm. Algebra} (22) (1994), no. 10, 3755-3774.

  \bibitem[9] {LL04}
  J. Lepowsky, H. S. Li,
  Introduction to Vertex Operator Algebras
 	and Their Representations. Progress in Math., Vol. 227,
 	Birkh\"auser, Boston, 2004.
 	
 	\bibitem[10]{L06}	
 	H. S. Li,
 	A new construction of vertex algebras and quasi-modules for vertex algebras.
 	{\em Adv. Math.} (202) (2006), no. 1, 232-286.

 \bibitem[11]{L07}
 	H. S. Li,
 	On certain generalizations of twisted affine Lie algebras and quasimodules for $\Gamma$-vertex algebras.
 {\em J. Pure Appl. Algebra} (209) (2007), no. 3, 853-871.

  \bibitem[12]{L11}
 	H. S. Li,
 	$\phi$-coordinated quasi modules for quantum vertex algebras.
 {\em Comm. Math. Phys.} (308) (2011), no. 3, 703-741.

  \bibitem[13]{L13}		
 H. S. Li,
 $G$-equivariant $\phi$-coordinated quasi modules for quantum vertex algebras.
 {\em J. Math. Phys.} (54) (2013), no. 5, 051704, 26 pp. 	

 \bibitem[14]{LTW20}
 	H. S. Li, S. Tan, Q. Wang,
  Trigonometric Lie algebras, affine Lie algebras, and vertex algebras.
  {\em Adv. Math.} (363) (2020), 106985, 34 pp.

  \bibitem[15]{LT06}
W. Q. Lin, S. Tan,
 Harish-Chandra modules for the
	$q$-analog Virasoro-like algebra.
 {\em J. Algebra} (297) (2006), no. 1, 254-272.

 \bibitem[16]{LT08}
	W. Q. Lin, S. Tan,
 Graded modules over the $q$-analog Virasoro-like algebra.
 {\em Algebr. Represent. Theory} (11) (2008), no. 6, 505-517.

  \bibitem[17]{MJ98}
 	D. J. Meng, C. P. Jiang,
 	The derivation algebra and the universal central extension of the
 	$q$-analog of the Virasoro-like algebra.
 {\em Comm. Algebra} (26) (1998), no. 4, 1335-1346.

 \bibitem[18]{M07}
 	K. Miki,
 	A $(q,\gamma)$ analog of the $W_{1+\infty}$ algebra.
 	{\em J. Math. Phys.} (48) (2007), no. 12, 123520, 35 pp.

 \bibitem[19]{ZZ96}
	H. C. Zhang, K. M. Zhao,
Representations of the Virasoro-like algebra and its
	$q$-analog. {\em Comm. Algebra} (24) (1996), no. 14, 4361-4372.

 \end{thebibliography}
\end{document}